\newcommand{\rrvert}{\vert}
\newcommand{\llvert}{\vert}
\newtheorem{theorem}{Theorem}[section]
\newtheorem{proposition}[theorem]{Proposition}
\newcommand{\eqref}[1]{(\ref{#1})}
\newcommand{\BX}{\mathbf{X}}
\newcommand{\by}{\mathbf{y}}
\newcommand{\reals}{{\mathbb R}}
\newcommand{\bbr}{\reals}
\newcommand{\bbn}{{\mathbb N}}
\newcommand{\vep}{\varepsilon}
\newcommand{\bbs}{\mathbb{S}}
\newcommand{\SaS}{S$\alpha$S}
\newcommand{\lst}{\stackrel{\mathrm{st}}{\leq}}
\newcommand{\eid}{\stackrel{d}{=}}
\newcommand{\one}{\mathbf{1}}
\begin{document}
\begin{frontmatter}

\title{Maxima of long memory stationary symmetric $\alpha$-stable processes,
and self-similar processes with stationary max-increments}
\runtitle{Limit theory for partial maxima}

\begin{aug}
\author[1]{\inits{T.}\fnms{Takashi}~\snm{Owada}\thanksref{1}\ead[label=e1]{takashiowada@ee.technion.ac.il}} \and
\author[2]{\inits{G.}\fnms{Gennady}~\snm{Samorodnitsky}\corref{}\thanksref{2}\ead[label=e2]{gs18@cornell.edu}}
\address[1]{Faculty of Electrical Engineering,
Technion, Haifa, Israel 32000.\\ \printead{e1}}
\address[2]{School of Operations Research and Information Engineering,
and Department of Statistical Science,
Cornell University,
Ithaca, NY 14853, USA. \printead{e2}}
\end{aug}

\received{\smonth{7} \syear{2013}}
\revised{\smonth{12} \syear{2013}}

%
\begin{abstract}
We derive a functional limit theorem for the partial maxima process
based on a long memory stationary $\alpha$-stable process. The length
of memory in the stable process is parameterized by a certain
ergodic-theoretical parameter in an integral representation of the
process. The limiting process is no longer a classical extremal
Fr\'echet process. It is a self-similar process with
$\alpha$-Fr\'echet
marginals, and it has stationary max-increments, a property which
we introduce in this paper. The functional limit theorem is
established in the space $D[0,\infty)$ equipped with the Skorohod
$M_1$-topology; in certain special cases the topology can be
strengthened to the Skorohod $J_1$-topology.
\end{abstract}

%
\begin{keyword}
\kwd{conservative flow}
\kwd{extreme value theory}
\kwd{pointwise dual ergodicity}
\kwd{sample maxima}
\kwd{stable process}
\end{keyword}

\end{frontmatter}

\section{Introduction} \label{sec:intro}

The asymptotic behaviour of the partial maxima sequence $M_n=\max_{1
\leq
k \leq n} X_k$, $n=1,2,\ldots$ for an i.i.d. sequence
$(X_1,X_2,\ldots)$ of random
variables is the subject of the classical extreme value theory, dating
back to Fisher and Tippett \cite{fisher:tippett:1928}. The basic result
of this theory
says that only three one-dimensional distributions, the Fr\'echet
distribution, the Weibull distribution and the Gumbel distribution,
have a max-domain of attraction. If $Y$ has one of these three
distributions, then for a distribution in its domain
of attraction, and a sequence of i.i.d. random variables with
that distribution,
%
\begin{equation}
\label{e:1din.conv} \frac{M_n-b_n}{a_n}\quad \Rightarrow\quad Y
\end{equation}
for properly chosen sequences $(a_n)$, $(b_n)$; see, for example,
Chapter~1 in
Resnick \cite{resnick:1987} or Section~1.2 in
de~Haan and Ferreira \cite{dehaan:ferreira:2006}. Under the same
max-domain of attraction
assumption, a~functional version of \eqref{e:1din.conv} was
established in Lamperti \cite{lamperti:1964}: with the same sequences $(a_n)$,
$(b_n)$ as in \eqref{e:1din.conv},
%
\begin{equation}
\label{e:funct.conv} \biggl( \frac{M_{\lfloor nt\rfloor}-b_n}{a_n},
 t\geq0 \biggr) \quad\Rightarrow\quad \bigl(Y(t),
t \geq0 \bigr)
\end{equation}
for a nondecreasing right continuous process $(Y(t), t\geq0)$, and
the convergence is weak convergence in the Skorohod $J_1$-topology on
$D[0,\infty)$. The limiting process is often called {\it the extremal
process}; its properties were established in
Dwass \cite{dwass:1964,dwass:1966} and Resnick and Rubinovitch \cite
{resnick:rubinovitch:1973}.

Much of the more recent research in extreme value theory concentrated
on the case when the underlying sequence $(X_1,X_2,\ldots)$ is
stationary, but may be dependent. In this case the extrema of the
sequence may cluster, and it is natural to expect that the limiting
results \eqref{e:1din.conv} and \eqref{e:funct.conv} will, in general,
have to be different. The extremes of moving average processes have
received special attention; see, for example, Rootz\'en \cite{rootzen:1978},
Davis and Resnick \cite{davis:resnick:1985} and Fasen \cite
{fasen:2005}. The extremes of the
$\operatorname{GARCH}(1,1)$ process were investigated in Mikosch and St\u{a}ric\u{a}
\cite{mikosch:starica:2000b}.
The classical work on the extremes of dependent
sequences is
Leadbetter \textit{et~al}. \cite{leadbetter:lindgren:rootzen:1983}; in some
cases this clustering
of the extremes can be characterized through the {\it extremal
index} (introduced, originally, in Leadbetter~\cite{leadbetter:1983}). The
latter is a number $0\leq\theta\leq1$. Suppose that a stationary
sequence $(X_1,X_2,\ldots)$ has this index, and let $(\tilde
X_1,\tilde X_2,\ldots)$ be an i.i.d. sequence with the same
one-dimensional marginal distributions as $(X_1,X_2,\ldots)$. If
\eqref{e:1din.conv} and \eqref{e:funct.conv} hold for the
i.i.d. sequence, then the corresponding limits will satisfy
$\tilde Y\eid\tilde Y(1)$, but the limit in \eqref{e:1din.conv}
for the dependent sequence $(X_1,X_2,\ldots)$ will satisfy $Y \eid
\tilde Y(\theta)$. In particular, the limit will be equal to zero if
the extremal index is equal to zero. This case can be viewed as that
of long range dependence in the extremes, and it has been mostly
neglected by the extreme value community. Long range dependence is,
however, an important phenomenon in its own right, and in this paper
we take a step towards understanding how long range dependence affects
extremes.

A random variable $X$ is said to have a regularly varying tail with
index $-\alpha$ for $\alpha>0$ if
\[
P(X>x) = x^{-\alpha} L(x),\qquad x>0 , %
\]
where $L$ is a slowly varying at infinity function, and the
distribution of any such random variable is in the max-domain of
attraction of the Fr\'echet distribution with the same parameter
$\alpha$; see, for example, Resnick \cite{resnick:1987}. Recall that
the Fr\'echet law
$F_{\alpha,\sigma}$ on $(0,\infty)$ with the tail index $\alpha$ and
scale $\sigma>0$
satisfies
%
\begin{equation}
\label{e:frechet} F_{\alpha,\sigma}(x) = \exp \bigl\{ -\sigma^\alpha
x^{-\alpha
} \bigr\},\qquad x>0 .
\end{equation}
Sometimes the term {\it$\alpha$-Fr\'echet} is used.
In this paper, we discuss the case of regularly varying tails and the
resulting limits in
\eqref{e:funct.conv}. The limits obtained in this paper belong to
the family of the so-called {\it Fr\'echet processes}, defined
below. We would like to emphasize that, even for stationary sequences
with regularly varying tails, non-Fr\'echet limits may appear in
\eqref{e:funct.conv}. We are postponing a detailed discussion of this
point to a future publication.

A stochastic process $(Y(t), t \in T)$ (on an arbitrary parameter
space $T$) is called a Fr\'echet process if for all $n \geq1$, $a_1,
\ldots, a_n >0$ and $t_1, \ldots, t_n \in T$, the weighted maximum
$\max_{1 \leq j \leq n} a_j Y(t_j)$ follows a Fr\'echet law as in
\eqref{e:frechet}. The best known Fr\'echet process is the extremal
Fr\'echet process obtained in the scheme \eqref{e:funct.conv} starting
with an i.i.d. sequence with regularly varying tails. The extremal
Fr\'echet process $ ( Y(t), t\geq0 )$ has finite-dimensional distributions defined by
%
\begin{eqnarray}
\label{e:extreme.frechet} \bigl( Y(t_1),Y(t_2),\ldots,
Y(t_n) \bigr) &\eid& \bigl( X^{(1)}_{\alpha,t_1^{1/\alpha}}, \max
\bigl( X^{(1)}_{\alpha,t_1^{1/\alpha}}, X^{(2)}_{\alpha,(t_2-t_1)^{1/\alpha}} \bigr),
\ldots,
\nonumber
\\[-8pt]
\\[-8pt]
&&\hphantom{\bigl(}{}\max \bigl( X^{(1)}_{\alpha,t_1^{1/\alpha}}, X^{(2)}_{\alpha,(t_2-t_1)^{1/\alpha}},
\ldots, X^{(n)}_{\alpha,(t_n-t_{n-1})^{1/\alpha}} \bigr) \bigr)
\nonumber
\end{eqnarray}
for all $n$ and $0\leq t_1<t_2<\cdots<t_n$. The different random
variables in the right-hand side of \eqref{e:extreme.frechet} are
independent, with $X^{(k)}_{\alpha,\sigma}$ having the Fr\'echet law
$F_{\alpha,\sigma}$ in \eqref{e:frechet}, for any $k=1,\ldots, n$.
The stationarity and independence of the max-increments of the
extremal Fr\'echet processes make it similar to the better known
L\'evy processes which have stationary and independent
sum-increments.
The structure of general Fr\'echet processes has been extensively
studied in the last several years. These processes were introduced in
Stoev and Taqqu \cite{stoev:taqqu:2005}, and their representations (as
a part of a
much more general context) were studied in
Kabluchko and Stoev \cite{kabluchko:stoev:2012}. Stationary Fr\'echet
processes (in particular, their ergodicity and mixing) were discussed
in Stoev \cite{stoev:2008}, Kabluchko \textit{et~al}. \cite
{kabluchko:schlather:dehaan:2009} and
Wang and Stoev \cite{wang:stoev:2010}.

In this paper, we concentrate on the maxima of stationary
$\alpha$-stable processes with $0<\alpha<2$. Recall that a random
vector $\BX$ in $\bbr^d$ is called $\alpha$-stable if for any $A$ and
$B>0$ we have
\[
A\BX^{(1)} + B\BX^{(2)} \eid \bigl(A^\alpha+B^\alpha
\bigr)^{1/\alpha} \BX+ \by, %
\]
where $\BX^{(1)}$ and $\BX^{(2)}$ are i.i.d. copies of $\BX$, and
$\by$ is a deterministic vector (unless $\BX$ is deterministic,
necessarily, $0<\alpha\leq2$). A stochastic process $(X(t), t \in
T)$ is called $\alpha$-stable if all of its finite-dimensional
distributions are $\alpha$-stable. We refer the reader to
Samorodnitsky and Taqqu \cite{samorodnitsky:taqqu:1994} for information
on $\alpha$-stable
processes. When $\alpha=2$, an $\alpha$-stable process is Gaussian,
while in the case $0<\alpha<2$, both the left and the right tails of a
(nondegenerate) $\alpha$-stable random variable $X$ are (generally)
regularly varying with exponent $\alpha$. That is,
\[
P(X>x) \sim c_+ x^{-\alpha},\quad\quad P(X<-x) \sim c_- x^{-\alpha} \quad\quad\mbox{as $x
\to\infty$} %
\]
for some $c_+, c_-\geq0$, $c_++c_->0$. That is, if
$(X_1,X_2,\ldots)$ is an i.i.d. sequence of $\alpha$-stable random
variables, then the i.i.d. sequence $(|X_1|,|X_2|,\ldots)$ satisfies
\eqref{e:1din.conv} and \eqref{e:funct.conv} with $a_n=n^{1/\alpha}$
(and $b_n=0$), $n\geq1$. Of course, we are not planning to study the
extrema of an i.i.d. $\alpha$-stable sequence. Instead, we will study
the maxima of (the absolute values of) a stationary $\alpha$-stable
process. The reason we have chosen to work with stationary $\alpha$-stable
processes is that their structure is very rich, and is also relatively
well understood. This will allow us to study the effect of that
structure on the limit theorems \eqref{e:1din.conv} and
\eqref{e:funct.conv}. We are specifically interested in the long range
dependent case, corresponding to the zero value of the extremal
index.

The structure of stationary symmetric $\alpha$-stable (\SaS) processes
has been
clarified in the last several years in the works of Jan Rosi\'nski;
see, for example, Rosi\'nski \cite{rosinski:1995,rosinski:2006}. The integral
representation of
such a process can be chosen to have a very special form. The class of
stationary \SaS\ processes we will investigate requires a
representation slightly more restrictive than the one generally
allowed. Specifically, we will consider discrete-time stationary
processes of the form
%
\begin{equation}
\label{e:underlying.proc} X_n = \int_E f \circ
T^n(x) \,\mathrm{d}M(x), \qquad n=1,2,\ldots,
\end{equation}
where $M$ is a \SaS\ random measure on a measurable space
$(E,\mathcal{E})$ with a $\sigma$-finite {\it infinite} control
measure $\mu$. The map $T\dvtx E
\to E$ is a measurable map that preserves the measure $\mu$. Further,
\mbox{$f \in L^{\alpha}(\mu)$}. See
Samorodnitsky and Taqqu \cite{samorodnitsky:taqqu:1994} for details on
$\alpha$-stable random
measures and integrals with respect to these measures. It is
elementary to check that a process with a
representation \eqref{e:underlying.proc} is, automatically,
stationary. Recall that any stationary \SaS\ process has a
representation of the form:
%
\begin{equation}
\label{e:general.integral} X_n = \int_E f_n(x)
\,\mathrm{d}M(x),\qquad n=1,2,\ldots,
\end{equation}
with
%
\begin{equation}
\label{e:general.rosinski} f_n(x) = a_n(x) \biggl(
\frac{\mathrm{d}\mu\circ T^{n}}{\mathrm{d}\mu}(x) \biggr)^{1/\alpha}
f\circ T^{n}(x),\quad\quad x \in E
\end{equation}
for $n=1,2,\ldots$\,, where $T\dvtx E \to E$ is a one-to-one map with both $T$
and $T^{-1}$ measurable, mapping the control measure $\mu$ into an
equivalent measure, and the sequence $(a_n)$ takes values $\pm1$
(and has the so-called cocycle property). Here $M$ is \SaS\ (and $f
\in L^{\alpha}(\mu)$). See Rosi\'nski \cite{rosinski:1995}.

In \eqref{e:underlying.proc} we assume, however,
that map $T$ is measure preserving. The main reason is that the
ergodic-theoretical notions we are using have been developed for
measure preserving maps. Indeed, it has been observed that the
ergodic-theoretical properties of the
map $T$, either in \eqref{e:underlying.proc} or in
\eqref{e:general.rosinski}, have a major impact on the memory of a
stationary $\alpha$-stable process. See,
for example, Surgailis \textit{et~al}. \cite
{surgailis:rosinski:mandrekar:cambanis:1993},
Samorodnitsky \cite{samorodnitsky:2004a,samorodnitsky:2005}, Roy \cite
{roy:2007},
Resnick and Samorodnitsky \cite{resnick:samorodnitsky:2004}, Owada and
Samorodnitsky \cite{owada:samorodnitsky:2012},
Owada \cite{owada:2013}. The
most relevant for this work is the result of
Samorodnitsky \cite{samorodnitsky:2004a}, who proved that,
if the map $T$ in \eqref{e:underlying.proc} or in
\eqref{e:general.rosinski} is conservative, then using the
normalization $a_n=n^{1/\alpha}$ ($b_n=0$) in \eqref{e:1din.conv}, as
indicated by the marginal tails, produces the zero limit, so the
partial maxima grow, in this case, strictly slower than at the rate of
$n^{1/\alpha}$. On the other hand, if the map $T$ is not conservative,
then the normalization $a_n=n^{1/\alpha}$ in \eqref{e:1din.conv} is
the correct one, and it leads to a Fr\'echet limit (we will survey
the ergodic-theoretical notions in the next section). Therefore, the
extrema of \SaS\ processes corresponding to conservative flows cluster
so much that the sequence of the partial maxima grows at a slower rate
than that indicated by the marginal tails. This case can be thought of
as indicating long range dependence. It is, clearly, inconsistent with
a positive extremal index.

The Fr\'echet limit obtained in \eqref{e:1din.conv} by
Samorodnitsky \cite{samorodnitsky:2004a} remains valid when the map
$T$ is
conservative (but with the normalization of a smaller order than
$n^{1/\alpha}$), as long as the map $T$ satisfies a certain additional
assumption. If one views the stationary $\alpha$-stable process as
a natural function of the Poisson points forming the random measure
$M$ in \eqref{e:general.integral} then, informally, this assumption
guarantees that only the largest Poisson point contributes,
distributionally, to the asymptotic behaviour of the partial maxima of
the process. In this paper, we restrict ourselves to this situation as
well. However, we will look at the limits obtained in the
much more informative functional scheme \eqref{e:funct.conv}. In this
paper, the assumption on the map $T$ will be expressed in terms of the
rate of growth of the so-called wandering rate sequence, which we define in
the sequel. We would like to emphasize that, when this wandering rate sequence
grows at a rate slower than the one assumed in this paper, new
phenomena seem to arise. Multiple Poisson points may contribute to the
asymptotic distribution of the partial maxima, and non-Fr\'echet limit
may appear in \eqref{e:funct.conv}. We leave a detailed study of this
to a
subsequent work.

In the next section, we provide the elements of the infinite ergodic
theory needed for the rest of the paper. In Section~\ref{sec:lim.processes} we introduce a new notion, that of a process
with stationary max-increments. It turns out that the possible limits
in the functional maxima scheme \eqref{e:funct.conv} (with $b_n=0$)
are self-similar with stationary max-increments. We discuss the
general properties of such processes and then specialize to the
concrete limiting process we obtain in the main result of the paper,
stated and proved in Section~\ref{sec:FLTPM}.

\section{Ergodic theoretical notions} \label{sec:ergodic}

In this section, we present some basic notation and notions of, mostly
infinite, ergodic theory used in
the sequel. The main references are Krengel \cite{krengel:1985},
Aaronson \cite{aaronson:1997}, and Zweim\"uller \cite{zweimuller:2009}.

Let $(E,\mathcal{E},\mu)$ be a $\sigma$-finite, infinite measure
space. We will say that $A = B$ mod $\mu$ if $A, B \in\mathcal{E}$
and $\mu(A\triangle B)=0$. For $f\in L^1(\mu)$ we will often write
$\mu(f)$ for the integral $\int f \,\mathrm{d}\mu$.

Let $T\dvtx E \to E$ be a measurable map preserving the measure $\mu$. The
sequence $(T^n)$ of iterates
of $T$ is called a {\it flow}, and the ergodic-theoretical properties of
the map and the flow are identified. A map $T$ is called {\it ergodic} if
any $T$-invariant set $A$ (i.e., a set such that $T^{-1}A = A$ mod
$\mu$) is trivial, that is, it satisfies $\mu(A)=0$ or $\mu(A^c)=0$. A map
$T$ is said to be {\it conservative} if
\[
\sum_{n=1}^{\infty} \one_A \circ
T^n = \infty \quad\quad\mbox{a.e. on } A %
\]
for any $A \in\mathcal{E}$, $0 < \mu(A) < \infty$; if $T$ is also
ergodic, then the restriction ``{\it on} $A$'' is not needed.

The {\it conservative part} of a measure-preserving $T$ is the largest
$T$-invariant subset $C$ of $E$ such that the restriction of $T$ to
$C$ is conservative. The set $D=E \setminus C$ is the {\it
dissipative part} of $T$ (and the decomposition $E=C\cup D$ is
called {\it the Hopf decomposition} of $T$).

The {\it dual operator} $\widehat{T}\dvtx  L^1(\mu) \to L^1(\mu)$ is
defined by
%
\begin{equation}
\label{e:define.dual} \widehat{T} f = \frac{\mathrm{d}
 (\nu_f \circ T^{-1} )}{\mathrm{d} \mu},\quad\quad f\in L^1(\mu) ,
\end{equation}
where $\nu_f$ is the signed measure $\nu_f(A) = \int_{A} f
\,\mathrm{d}\mu$, $A \in\mathcal{E}$. The dual operator satisfies the duality
relation
%
\begin{equation}
\label{e:dual.rel} \int_E \widehat{T} f\cdot g \,\mathrm{d}\mu= \int
_E f\cdot g \circ T \,\mathrm{d}\mu
\end{equation}
for $f\in L^1(\mu), g\in L^\infty(\mu)$. Note that
\eqref{e:define.dual} makes sense for any nonnegative measurable
function $f$ on $E$, and the resulting $\widehat{T} f$ is again a
nonnegative measurable
function. Furthermore, \eqref{e:dual.rel} holds for arbitrary
nonnegative measurable functions $f$ and $g$.

A conservative, ergodic and measure preserving map $T$ is said to be
{\it pointwise dual ergodic}, if there exists a normalizing sequence
$a_n \nearrow\infty$ such that
%
\begin{equation}
\label{e:pde} \frac{1}{a_n} \sum_{k=1}^n
\widehat{T}^k f \to\mu(f) \quad\quad\mbox{a.e. for every } f \in
L^1(\mu) .
\end{equation}
The property of pointwise dual ergodicity rules out invertibility of
the map $T$. Since the measure $\mu$ is infinite, choosing a
nonnegative function $f$ and using Fatou's lemma shows that only rates
$a_n=\mathrm{o}(n)$ are possible in pointwise dual ergodicity. Intuitively, as
will be seen
in \eqref{e:prop3.8.7} below, the longer time it takes the trajectory
of a point under the map $T$ to
return to a set of a finite positive measure, the smaller is the
normalizing sequence $(a_n)$.

Sometimes we require that for some functions the above convergence
takes place uniformly on a certain set. A set $A \in
\mathcal{E}$ with $0 < \mu(A) < \infty$ is said to be a {\it
uniform set} for a conservative, ergodic and measure preserving map
$T$, if there exist a normalizing sequence $a_n \nearrow\infty$ and a
nontrivial nonnegative measurable function $f \in L^1(\mu)$
(nontriviality means that $f$ is different from zero on a set of
positive measure) such that
%
\begin{equation}
\label{e:uniform_set} \frac{1}{a_n} \sum_{k=1}^n
\widehat{T}^k f \to\mu(f) \quad\quad\mbox {uniformly, a.e. on } A .
\end{equation}
If \eqref{e:uniform_set} holds for $f=\one_A$, the set $A$ is called
{\it a Darling--Kac set}. A conservative, ergodic and measure
preserving map $T$ is pointwise dual ergodic if and only if $T$ admits
a uniform set; see Proposition 3.7.5 in Aaronson \cite{aaronson:1997}. In
particular, it is legitimate to use the same normalizing
sequence $(a_n)$ both in (\ref{e:pde}) and (\ref{e:uniform_set}).

Let $A \in\mathcal{E}$ with $0 < \mu(A) < \infty$. The frequency of
visits to the set $A$ along the trajectory $( T^n x)$, $x\in E$, is
naturally related to the {\it wandering rate} sequence
%
\begin{equation}
\label{e:wanderingrate} w_n = \mu \Biggl(\bigcup
_{k=0}^{n-1} T^{-k}A \Biggr) .
\end{equation}
If we define the first entrance time to $A$ by
\[
\varphi_A(x) = \min \bigl\{ n \geq1\dvtx  T^n x \in A \bigr
\} %
\]
(notice that $\varphi_A < \infty$ a.e. on $E$ since $T$ is
conservative and
ergodic), then $w_n \sim\mu(\varphi_A < n)$ as $n\to\infty$. Since $T$
is also measure preserving, we have
$\mu(A \cap\{ \varphi_A > k \}) = \mu(A^c \cap\{ \varphi_A = k \})$
for $k \geq1$ (see, e.g., Zweim\"uller \cite{zweimuller:2009}). Therefore,
alternative expressions for the wandering rate sequence are
\[
w_n = \mu(A) + \sum_{k=1}^{n-1}
\mu \bigl(A^c \cap\{ \varphi_A = k \} \bigr) = \sum
_{k=0}^{n-1} \mu \bigl(A \cap\{
\varphi_A > k \} \bigr) . %
\]

Suppose now that $T$ is a pointwise dual ergodic map, and let $A$ be a
uniform set for $T$. It turns out that, under an assumption of regular
variation, there is a precise connection between the wandering rate
sequence $(w_n)$ and the normalizing sequence
$(a_n)$ in \eqref{e:pde} and \eqref{e:uniform_set}. Specifically, let
$RV_{\gamma}$ represent the class of regularly varying at infinity
sequences (or
functions, depending on the context) of
index $\gamma$. If either $(w_n) \in RV_{\beta}$ or $(a_n) \in
RV_{1-\beta}$ for some
$\beta\in[0,1]$, then
%
\begin{equation}
\label{e:prop3.8.7} a_n \sim\frac{1}{\Gamma(2-\beta)
 \Gamma(1+\beta)} \frac{n}{w_n} \quad\quad\mbox{as
} n \to\infty.
\end{equation}
Proposition 3.8.7 in Aaronson \cite{aaronson:1997} gives one direction
of this
statement, but the argument is easily reversed. The
normalizing sequence $(a_n)$ and the wandering rate sequence $(w_n)$
are both related to the frequency with which a uniform set $A$ is
visited along the trajectory $(T^nx)$ that starts in $A$.

We finish this section with a statement on distributional convergence
of the partial maxima for pointwise dual ergodic flows. It will be
used repeatedly in the proof of the main theorem. For a measurable
function $f$ on $E$ define
\[
M_n(f) (x) = \max_{1 \leq k \leq n}\bigl|f \circ
T^k(x)\bigr| ,\quad\quad x \in E, n \geq1 . %
\]

The proposition below involves weak convergence in the space
$D[0,\infty)$ equipped with two different topologies, the Skorohod
$J_1$-topology and the Skorohod $M_1$-topology, introduced in
Skorohod \cite{skorohod:1956}. The details could be found, for
instance, in
Billingsley \cite{billingsley:1999} (for the $J_1$-topology), and in
Whitt \cite{whitt:2002} (for the $M_1$-topology). See also Remark
\ref{rk:topologies}.

In the sequel, we will use the convention $\max_{k\in K}b_k = 0$ for
a nonnegative sequence $(b_n)$, if $K=\varnothing$.

%
\begin{proposition} \label{p:max.ergodic}
Let $T$ be a pointwise dual ergodic map on a $\sigma$-finite,
infinite, measure space $(E,\mathcal{E},\mu)$. We assume that the
normalizing sequence $(a_n)$ is regularly varying with exponent
$1-\beta$ for some $0 < \beta\leq1$. Let $A \in\mathcal{E}$, $0 <
\mu(A) < \infty$, be a uniform set for $T$. Define a probability
measure on $E$ by $\mu_n (\cdot) = \mu(\cdot\cap\{\varphi_A \leq
n\})
/ \mu(\{\varphi_A \leq n \})$.
Let $f\dvtx E \to\bbr$ be a measurable bounded function supported by the
set $A$, that is, $\operatorname{supp}(f) \subset A$. Let $\| f \|_{\infty} = \inf\{M
\dvtx  |f(x)| \leq M \mbox{ a.e. on } A \}$. Then
%
\begin{eqnarray}
\label{e:underlying.conv}&& \bigl( M_{\lfloor nt\rfloor}(f),
 0\leq t\leq1 \bigr) \nonumber\\[-8pt]\\[-8pt]
 &&\quad\Rightarrow\quad\|f
\|_\infty ( \one_{\{ V_{\beta} \leq t \}}, 0\leq t\leq1 )\quad\quad \mbox{in the
$M_1$-topology on $D[0,1] $,}\nonumber
\end{eqnarray}
where the law of the left-hand side is computed with respect to
$\mu_n$, and $V_{\beta}$ is a random variable defined on a probability
space $(\Omega^{\prime},\mathcal{F}^{\prime},P^{\prime})$ with
$P^{\prime}(V_{\beta} \leq x) = x^{\beta}$, $0 < x \leq1$.
If $f=\one_A$, then the convergence above takes place in the
$J_1$-topology as well.
\end{proposition}
%

\begin{remark} \label{rk:topologies}
It is not difficult to see why the weak convergence in
\eqref{e:underlying.conv} holds in the $J_1$-topology for indicator
functions, but only in the $M_1$-topology in general. Indeed, for
functions $f$ other than the indicator function, the limiting value of
$\|f\|_\infty$ may have an asymptotically non-vanishing probability of
being reached in multiple closely placed steps, which precludes the
$J_1$-tightness, since the $J_1$-modulus does not become small; see,
for example, Theorem 13.2 in Billingsley \cite{billingsley:1999}. One
can easily
construct (very general) examples of situations in which this
can be made precise. On
the other hand, if $f = \one_A$, then the limiting value is reached
by a single jump, matching the single jump in the limiting process,
which gives convergence in the $J_1$-topology.
\end{remark}
\begin{pf*}{Proof of Proposition~\ref{p:max.ergodic}}
For $0<\vep<1$, let $A_\vep= \{ x\in A\dvtx  |f(x)|\geq(1-\vep)\|
f\|_\infty\}$. Note that each $A_\vep$ is uniform since $A$ is
uniform. Clearly,
\[
(1-\vep)\| f\|_\infty\one_{ \{\varphi_{A_\vep}(x)\leq nt \} } \leq M_{\lfloor nt\rfloor}(f) (x)
\leq\| f\|_\infty\one_{ \{\varphi
_A(x)\leq
nt \} } \quad\quad\mu\mbox{-a.e.} %
\]
for all $n\geq1$ and $0\leq t\leq1$. Since for monotone functions
weak convergence in the $M_1$-topology is implied by convergence in
finite-dimensional distributions (see, e.g., Proposition 2 in
Avram and Taqqu \cite{avram:taqqu:1992}), we can use Theorem 3.2 in
Billingsley \cite{billingsley:1999} in a finite-dimensional situation.
The statement of the proposition will follow once we show that, for a
uniform set $B$ (which could be either $A$ or $A_\vep$) the law of
$\varphi_B/n$ under $\mu_n$ converges to the law of $V_{\beta}$. Let
$(w_n^{(B)})$ be the corresponding wandering rate sequence. Since
\eqref{e:prop3.8.7} holds for $(w_n^{(B)})$ with the same normalizing
constants $(a_n)$, we know that $w_n^{(B)}\sim w_n^{(A)}:= w_n$ as
$n\to\infty$. Therefore,
\[
\mu_n \biggl( \frac{\varphi_{B}}{n} \leq x \biggr) = \frac{\mu
(\varphi_{B}
\leq\lfloor nx \rfloor)}{\mu(\varphi_A \leq n)}
\sim \frac{w_{\lfloor nx \rfloor}^{(B)}}{w_n} \to x^{\beta} %
\]
for all $0 < x \leq1$, because the wandering rate sequence $(w_n)$ is
regularly varying with index $\beta$ by \eqref{e:prop3.8.7}.

Next, suppose that $f(x) = \one_A(x)$. In this case, $M_{\lfloor nt
\rfloor}(\one_A)(x) = \one_{\{ \varphi_A(x) \leq nt \}}$. An
application of the Skorohod embedding theorem tells us that on some
common probability space, the time of the jump of the process
$\one_{\{ \varphi_A(\cdot) \leq nt \}}$ converges a.s. to the time
of the jump of the process
$\one_{\{ V_{\beta} \leq t \}}$. This, in turn, implies
a.s. convergence of these processes in the space $D[0,1]$ in the
$J_1$-topology, hence their weak convergence in that topology.
\end{pf*}

\section{Self-similar processes with stationary
max-increments} \label{sec:lim.processes}

The limiting process obtained in the next section shares with any possible
limits in the functional maxima scheme \eqref{e:funct.conv} (with
$b_n=0$) two very specific properties, one of which is classical, and
the other is less so. Recall that a stochastic process $ ( Y(t),
t\geq0 )$ is called self-similar with exponent $H$ of
self-similarity if for any $c>0$
\[
\bigl( Y(ct), t\geq0 \bigr)\eid \bigl( c^HY(t), t\geq0 \bigr)
\]
in the sense of equality of finite-dimensional distributions. The
best known classes of self-similar processes arise in various versions
of a functional central limit theorem for stationary processes, and
they have an additional property of stationary increments. Recall that
a stochastic process $ ( Y(t), t\geq0 )$ is said to have
stationary increments if for any $r\geq0$
%
\begin{equation}
\label{e:stat.incr} \bigl( Y(t+r)-Y(r), t\geq0 \bigr) \eid \bigl( Y(t)-Y(0), t\geq 0
\bigr) ;
\end{equation}
see, for example, Embrechts and Maejima \cite{embrechts:maejima:2002} and
Samorodnitsky \cite{samorodnitsky:2006LRD}. In the context of the
functional limit
theorem for the maxima \eqref{e:funct.conv}, a different property
appears.

%
\begin{definition} \label{d:max.stat.def}
A stochastic process $( Y(t), t \geq0 )$ is said to have
stationary max-increments if for every $r \geq0$, there exists,
perhaps on an enlarged probability space, a
stochastic process $ ( Y^{(r)}(t), t \geq0  )$ such that
%
\begin{eqnarray}
\label{e:def.statmaxi} \bigl( Y^{(r)}(t), t \geq0 \bigr) &\stackrel{d} {=}
&\bigl( Y(t), t \geq0 \bigr) ,
\nonumber
\\[-8pt]\\[-8pt]
\bigl( Y(t+r), t \geq0 \bigr) &\stackrel{d} {=}& \bigl( Y(r) \vee
Y^{(r)}(t), t \geq0 \bigr) .\nonumber
\end{eqnarray}
\end{definition}

Notice the analogy between the definition \eqref{e:stat.incr} of
stationary increments (when $Y(0)=0$) and Definition
\ref{d:max.stat.def}. Since the operations of taking the maximum is not
invertible (unlike summation), the latter definition, by necessity, is
stated in terms of existence of the max-increment process $ (
Y^{(r)}(t), t \geq0  )$.

%
\begin{theorem} \label{t:max.lamperti}
Let $(X_1,X_2,\ldots)$ be a stationary sequence. Assume that for some
sequence \mbox{$a_n\to\infty$}, and a stochastic process $(Y(t), t\geq0)$
such that $P(Y(t)=Y(1))<1$ for $t\neq1$,
\[
\biggl( \frac{1}{a_n} M_{\lfloor nt\rfloor}, t\geq0 \biggr) \quad\Rightarrow\quad
\bigl(Y(t), t\geq0 \bigr) %
\]
in terms of convergence of finite-dimensional distributions. Then
$(Y(t), t\geq0)$ is self-similar with exponent $H>0$ of
self-similarity, and has stationary max-increments. Furthermore,
\mbox{$(Y(t), t\geq0)$} is continuous in probability. The sequence
$(a_n)$ is regularly varying with index $H$.
\end{theorem}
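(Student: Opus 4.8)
The overall strategy is the classical Lamperti-type argument, adapted from sums to maxima. The plan is to first extract self-similarity from the assumed convergence by a standard subsequence/regular-variation argument, and then to read off stationary max-increments directly from the stationarity of $(X_n)$.

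First I would establish that $(a_n)$ is regularly varying and that $Y$ is self-similar. The key observation is that for fixed $c>0$ and $t>0$,
\[
\frac{1}{a_{\lfloor cn\rfloor}} M_{\lfloor cnt\rfloor} \Rightarrow Y(t)
\quad\text{while}\quad
\frac{1}{a_n} M_{\lfloor cnt\rfloor} = \frac{a_{\lfloor cn\rfloor}}{a_n}\cdot\frac{1}{a_{\lfloor cn\rfloor}} M_{\lfloor cnt\rfloor},
\]
and the left-hand side, being $\tfrac{a_n}{a_n}$ times a rescaled maximum, also converges (after replacing $t$ by $ct$ in the hypothesis) to $Y(ct)$. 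Since $Y(t)$ is non-degenerate for every $t>0$, the convergence-of-types theorem forces $a_{\lfloor cn\rfloor}/a_n$ to converge to a finite positive limit $\psi(c)$ for each $c>0$, and then $Y(ct)\eid \psi(c)Y(t)$. Measurability of $\psi$ (inherited from the monotone structure of $(a_n)$, or via the usual argument that a pointwise limit of ratios of a sequence is measurable) together with the Cauchy-type functional equation $\psi(c_1c_2)=\psi(c_1)\psi(c_2)$ gives $\psi(c)=c^H$ for some $H\in\reals$; hence $(a_n)\in RV_H$ and $\bigl(Y(ct),\,t\ge 0\bigr)\eid\bigl(c^HY(t),\,t\ge 0\bigr)$ jointly in finite-dimensional distributions (the joint statement follows because the same $\psi(c)$ works simultaneously at all $t$). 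That $H>0$ follows from $a_n\to\infty$ together with monotonicity considerations: if $H\le 0$ the sequence $a_n$ could not tend to infinity while being regularly varying of index $H$ (an $RV_\gamma$ sequence with $\gamma<0$ tends to $0$, and $\gamma=0$ combined with $M_n$ being nondecreasing in $n$ would force $Y$ to be degenerate or constant in $t$, contradicting the hypothesis; a short argument using $M_{\lfloor nt\rfloor}\le M_{\lfloor ns\rfloor}$ for $t\le s$ and $Y(t)\not\equiv\text{const}$ rules this out).

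Next I would verify stationary max-increments. Fix $r\ge 0$ and an integer approximation: for $t\ge 0$ write $M_{\lfloor n(t+r)\rfloor} = M_{\lfloor nr\rfloor}\vee \bigl(\max_{\lfloor nr\rfloor< k\le \lfloor n(t+r)\rfloor} X_k\bigr)$. By stationarity of $(X_n)$, the process $\bigl(\max_{\lfloor nr\rfloor< k\le \lfloor nr\rfloor+\lfloor nt\rfloor} X_k,\ t\ge 0\bigr)$ has the same law as $\bigl(M_{\lfloor nt\rfloor},\,t\ge 0\bigr)$. Dividing by $a_n$ and passing to the limit along the convergence hypothesis — applied once to the pair consisting of the "past" maximum $a_n^{-1}M_{\lfloor nr\rfloor}$ and the "increment" maximum — one obtains, on a possibly enlarged space, a process $Y^{(r)}$ distributed as $Y$ with $\bigl(Y(t+r),\,t\ge 0\bigr)\eid\bigl(Y(r)\vee Y^{(r)}(t),\,t\ge 0\bigr)$, which is exactly \eqref{e:def.statmaxi}. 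Some care is needed because the "past" and "increment" pieces are not independent; but since we only claim equality in distribution of the max-combination, and the increment process already converges in finite-dimensional distributions to a copy of $Y$, a tightness-free finite-dimensional argument (using that for each finite set of times the joint law of the scaled past-max and increment-max is a law on $\reals^{2k}$ whose second marginal converges) suffices, extracting a subsequential limit of the joint laws and identifying the marginals.

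The main obstacle I anticipate is the bookkeeping around the discrepancy between $\lfloor n(t+r)\rfloor$ and $\lfloor nr\rfloor+\lfloor nt\rfloor$ (an error of at most one index, negligible since $a_n\to\infty$ and a single term $X_k/a_n\Rightarrow 0$ when $a_n\to\infty$, using regular variation with positive index so that $a_{n+1}/a_n\to 1$), together with justifying the passage to the limit in the joint (past, increment) law without independence. Both are routine but must be done carefully; the convergence-of-types step and the functional-equation step for $\psi$ are standard. I would organize the write-up as: (i) derive $\psi(c)=\lim a_{\lfloor cn\rfloor}/a_n$ and the functional equation; (ii) conclude $(a_n)\in RV_H$ and self-similarity; (iii) argue $H>0$; (iv) prove stationary max-increments via stationarity of $(X_n)$ and a subsequential joint-limit argument.
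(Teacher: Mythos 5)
Your overall route is the same as the paper's: the self-similarity, the positivity of $H$ and the regular variation of $(a_n)$ are exactly the content of the Lamperti theorem (which the paper simply cites, and whose convergence-of-types proof you correctly sketch, noting that for maxima of a stationary sequence the discrete/continuous-index distinction is immaterial), and the stationary max-increments are read off from the decomposition $M_{\lfloor n(t+r)\rfloor}=M_{\lfloor nr\rfloor}\vee\max_{nr<j\leq n(t+r)}X_j$ together with stationarity of $(X_n)$ and a subsequential limit of the joint (past, increment) laws, just as in the paper.

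There is, however, one genuine gap in your treatment of the max-increments. Definition \ref{d:max.stat.def} requires a \emph{single} process $\bigl(Y^{(r)}(t),\,t\geq 0\bigr)$ for which both identities in \eqref{e:def.statmaxi} hold simultaneously in all finite-dimensional distributions. Your argument extracts, for each fixed finite collection $t_1,\dots,t_k$, a subsequence along which the joint law of the scaled past-maximum and increment-maxima converges; but different collections of times may force different subsequences, and the resulting $k$-dimensional limit laws need not be consistent, so they do not by themselves assemble into one process. The paper closes this by a diagonalization: enumerate the rationals $\tau_1,\tau_2,\dots$ in $[0,\infty)$, extract a single subsequence $(n_m)$ along which the joint convergence \eqref{e:rational.conv} holds for all rational times at once (Kolmogorov consistency then yields a process $Y^{(r)}$ on the rationals with $\bigl(Y^{(r)}(\tau_i)\bigr)\eid\bigl(Y(\tau_i)\bigr)$), and then extends $Y^{(r)}$ to all $t\geq 0$ using monotonicity of the increment maxima, via one-sided limits along rationals; monotonicity of the prelimit quantities is also what upgrades the rational-time convergence to \eqref{e:full.conv} for all $t$. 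You should add this step; without it the construction of $Y^{(r)}$ as a process is incomplete. (A minor quibble: your "tightness-free" label is misleading --- the existence of subsequential joint limits is precisely tightness of the joint laws on $\reals^{k+1}$, which follows because each marginal converges.)
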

\begin{pf}
The facts that the limiting process $(Y(t), t\geq0)$ is
self-similar with exponent $H\geq0$ of
self-similarity, and that the
sequence $(a_n)$ is regularly varying with index $H$, follow from the
Lamperti theorem; see Lamperti \cite{lamperti:1962}, or Theorem 2.1.1 in
Embrechts and Maejima \cite{embrechts:maejima:2002}. The case $H=0$ is
ruled out by the
assumption that $P(Y(t)=Y(1))<1$ for $t\neq1$.
Lamperti's theorem is usually stated
and proved in the context of convergence in the situation when the
time is scaled by a parameter converging to infinity along the real
values, whereas in our situation the time scaling converges to
infinity along a discrete sequence of the integers. However, it is
easy to check that for maxima of stationary processes convergence
along a discrete sequence provides the same information as convergence
along all real values. Note, further, that for every $0\leq t_1<t_2$
and $n$ large enough,
\[
\frac{1}{a_n} ( M_{\lfloor nt_2\rfloor} - M_{\lfloor
nt_1\rfloor} ) \leq \frac{1}{a_n}
\max_{nt_1<i\leq nt_2} X_i \lst \frac{1}{a_n}
M_{\lfloor2n(t_2-t_1)\rfloor} %
\]
by the stationarity. Taking weak limits, we see that the difference
$Y(t_2)-Y(t_1)$ is nonnegative and
bounded stochastically by $Y (2(t_2-t_1) )$. Therefore, it
follows from the
self-similarity of $(Y(t), t\geq0)$ that it is continuous in
probability.

We check now the stationarity of the max-increments of the limiting
process. Let $r>0$, and $t_i>0, i=1,\ldots, k$, some $k\geq1$.
Write
%
\begin{equation}
\label{e:decompose} \frac{1}{a_n} M_{\lfloor n(t_i+r)\rfloor} = \frac{1}{a_n}
M_{\lfloor
nr\rfloor} \bigvee\frac{1}{a_n} \max_{nr<j\leq n(t_i+r)}
X_j,\quad\quad i=1,\ldots, k .
\end{equation}
By the assumption of the theorem and stationarity of the process
$(X_1,X_2,\ldots)$,
\[
\frac{1}{a_n} M_{\lfloor nr\rfloor} \quad\Rightarrow\quad
 Y(r), \quad\quad\biggl( \frac{1}{a_n}
\max_{nr<j\leq n(t_i+r)} X_j, i=1,\ldots, k \biggr) \quad\Rightarrow\quad
\bigl( Y(t_1),\ldots, Y(t_k) \bigr) %
\]
as $n\to\infty$. Since every weakly converging sequence is tight, and
a sequence with tight marginals is itself tight, we conclude that
\[
\biggl( \frac{1}{a_n} M_{\lfloor nr \rfloor}, \biggl( \frac{1}{a_n} \max
_{nr< j \leq n(t_i+r)} X_j, i=1,\ldots, k \biggr) \biggr)
\]
is a tight sequence. This tightness means that for every sequence
$n_m\to\infty$ there is a subsequence $n_{m(l)}\to\infty$ and a
$k$-dimensional random vector $ ( Y^{(r)}(t_1),\ldots,
Y^{(r)}(t_k) ) \stackrel{d}{=}  ( Y(t_1),\ldots,
Y(t_k) )$
such that as $l \to\infty$,
\begin{eqnarray*}
&&\biggl( \frac{1}{a_{n_{m(l)}}} M_{\lfloor{n_{m(l)}}r\rfloor}, \biggl( \frac{1}{a_{n_{m(l)}}} \max
_{n_{m(l)}r<j\leq n_{m(l)}(t_i+r)} X_j, i=1,\ldots, k \biggr) \biggr)
\\
&&\quad\Rightarrow \quad\bigl( Y(r), \bigl( Y^{(r)}(t_1),\ldots,
Y^{(r)}(t_k) \bigr) \bigr) . %
\end{eqnarray*}

Let now $\tau_i$, $i=1,2,\ldots$ be an enumeration of the rational
numbers in $[0,\infty)$. A diagonalization argument shows that there
is a sequence $n_m\to\infty$ and a stochastic process $ (
Y^{(r)}(\tau_i), i=1,2,\ldots )$ with $ ( Y^{(r)}(\tau_i),
i=1,2,\ldots
)\eid ( Y(\tau_i), i=1,2,\ldots )$ such that
%
\begin{eqnarray}
\label{e:rational.conv} &&\biggl( \frac{1}{a_{n_{m}}} M_{\lfloor{n_{m}}r\rfloor}, \biggl(
\frac{1}{a_{n_{m}}} \max_{n_{m}r<j\leq n_{m}(\tau_i+r)} X_j, i=1,2,\ldots
\biggr) \biggr) \nonumber
\\[-8pt]\\[-8pt]
&&\quad\Rightarrow \quad\bigl( Y(r), \bigl( Y^{(r)}(
\tau_i), i=1,2,\ldots \bigr) \bigr)\nonumber
\end{eqnarray}
in finite-dimensional distributions, as $m\to\infty$. We extend the
process $Y^{(r)}$ to the entire positive half-line by setting
\[
Y^{(r)}(t) = \frac{1}2 \Bigl( \lim_{\tau\uparrow t,\ \mathrm{rational}}
Y^{(r)}(\tau) + \lim_{\tau\downarrow t, \ \mathrm{rational}} Y^{(r)}(\tau)
\Bigr),\quad\quad t\geq0 . %
\]
The continuity in probability implies that this process is a version
of $(Y(t), t\geq0)$. This continuity in
probability, \eqref{e:rational.conv} and monotonicity imply that as $m
\to\infty$,
%
\begin{equation}
\label{e:full.conv} \biggl( \frac{1}{a_{n_{m}}} M_{\lfloor{n_{m}}r\rfloor}, \biggl(
\frac{1}{a_{n_{m}}} \max_{n_{m}r<j\leq n_{m}(t+r)} X_j, t\geq0 \biggr)
\biggr) \quad\Rightarrow\quad \bigl( Y(r), \bigl( Y^{(r)}(t), t\geq0 \bigr) \bigr)\nonumber
\end{equation}
in finite-dimensional distributions. Now the stationarity of
max-increments follows from \eqref{e:decompose},
\eqref{e:full.conv} and continuous mapping theorem.
\end{pf}

%
\begin{remark} \label{rk:sup.measures}
Self-similar processes with stationary max-increments arising in a
functional maxima scheme \eqref{e:funct.conv} are close in spirit to
the stationary self-similar extremal processes of
O{'}Brien \textit{et al}. \cite{obrien:torfs:vervaat:1990}, while
extremal processes themselves
are defined as random sup measures. A
random sup measure is, as its name implies, indexed by sets. They also
arise in a limiting maxima scheme similar to \eqref{e:funct.conv}, but
with a stronger notion of convergence. Every stationary self-similar
extremal processes trivially produces a self-similar process with
stationary max-increments via restriction to sets of the type $[0,t]$
for $t\geq0$, but the connection between the two objects remains
unclear. Our limiting process in Theorem
\ref{t:main.max} below can be extended to a stationary self-similar
extremal processes, but the extension is highly nontrivial, and will
not be pursued here.\vadjust{\goodbreak}
\end{remark}

It is not our goal in this paper to study in details the properties of
self-similar processes with stationary max-increments, so we restrict
ourselves to the following basic result.

\begin{proposition} \label{pr:sssmaxi}
Let $ ( Y(t), t \geq0  )$ be a nonnegative self-similar
process with
stationary max-increments, and exponent $H$ of
self-similarity. Suppose $ (Y(t), t \geq0  )$ is not
identically zero. Then
$H\geq0$, and the following statements hold.
\begin{enumerate}
 \item[(a)]  If $H=0$, then $Y(t)=Y(1)$ a.s. for every $t>0$.

 \item[(b)]  If $0<EY(1)^p<\infty$ for some $p>0$, then $H\leq1/p$.

 \item[(c)]  If $H>0$, $ ( Y(t), t \geq0  )$ is continuous in
probability.
\end{enumerate}
\end{proposition}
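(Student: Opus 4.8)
The plan is to derive all three statements (and the inequality $H\ge 0$) from the distributional identity \eqref{e:def.statmaxi}, used only at one and at two time points, together with the self-similarity relation $Y(c)\eid c^{H}Y(1)$ obtained by setting $t=1$ in the definition of self-similarity. Two preliminary observations drive everything. Evaluating the two sides of the process identity in \eqref{e:def.statmaxi} at time $0$ gives $Y(r)\eid Y(r)\vee Y^{(r)}(0)$; since $Y(r)\vee Y^{(r)}(0)$ dominates $Y(r)$ pointwise on the enlarged space yet has the same law, $Y^{(r)}(0)\le Y(r)$ a.s. Reading off the bivariate marginal of the process identity at times $0$ and $t$ then yields
\begin{equation} \label{e:pair}
\bigl(Y(r),\,Y(r+t)\bigr)\ \eid\ \bigl(Y(r),\,Y(r)\vee Y^{(r)}(t)\bigr),\qquad r,t\ge 0,
\end{equation}
where $Y^{(r)}\eid Y$. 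In particular $Y(r+t)$ stochastically dominates $Y(r)$, so $t\mapsto Y(t)$ is stochastically nondecreasing; combined with $Y(c)\eid c^{H}Y(1)$ this forces $H\ge 0$, since if $H<0$ then for any $c>1$ we would get $Y(1)\preceq Y(c)\eid c^{H}Y(1)\preceq Y(1)$, hence $c^{H}Y(1)\eid Y(1)$, and iterating the scaling relation would send $Y(1)$ to $0$ almost surely --- impossible because $Y\not\equiv 0$ (which, via self-similarity and \eqref{e:pair} with $r=0$, forces $P(Y(1)>0)>0$).

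For part (a), when $H=0$ self-similarity gives $Y(t)\eid Y(1)$ for every $t>0$. Fix $0<s<t$ and apply \eqref{e:pair} with $r=s$ and increment $t-s$: then $Y(s)\vee Y^{(s)}(t-s)$ has the law of $Y(t)\eid Y(1)$, dominates $Y(s)\eid Y(1)$ pointwise, and therefore equals $Y(s)$ almost surely (a nonnegative random variable that dominates another of the same law must coincide with it a.s., e.g.\ by comparing $E[g(\cdot)]$ for a bounded strictly increasing $g$). Hence the joint law of $\bigl(Y(s),Y(t)\bigr)$ is carried by the diagonal, i.e.\ $Y(t)=Y(s)$ a.s. Taking $s=1$ for $t>1$, and applying the same statement with the two times interchanged for $0<t<1$, we obtain $Y(t)=Y(1)$ a.s.\ for every $t>0$.

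For part (b), the mechanism is the elementary pointwise bound $\max(a,b)^{p}=\max(a^{p},b^{p})\le a^{p}+b^{p}$. From \eqref{e:def.statmaxi} with $r=1$, evaluated at time $1$, $Y(2)\eid Y(1)\vee Y^{(1)}(1)$ with $Y^{(1)}(1)\eid Y(1)$, so $EY(2)^{p}\le 2\,EY(1)^{p}$; on the other hand $Y(2)\eid 2^{H}Y(1)$ gives $EY(2)^{p}=2^{Hp}EY(1)^{p}$. Since $0<EY(1)^{p}<\infty$, cancelling yields $2^{Hp}\le 2$, that is, $H\le 1/p$.

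For part (c), first $Y(0)\eid c^{H}Y(0)$ for all $c>0$ together with $H>0$ forces $Y(0)=0$ a.s., while $Y(t)\eid t^{H}Y(1)\to 0$ as $t\downarrow 0$; hence $Y$ is continuous in probability at $0$. For $t>0$ and $h\downarrow 0$, \eqref{e:pair} with $r=t$ shows that $Y(t+h)-Y(t)$ has the law of $\bigl(Y^{(t)}(h)-Y(t)\bigr)^{+}\le Y^{(t)}(h)\eid Y(h)\eid h^{H}Y(1)$, which tends to $0$ in probability; similarly \eqref{e:pair} with $r=t-h$ shows $Y(t)-Y(t-h)$ has the law of $\bigl(Y^{(t-h)}(h)-Y(t-h)\bigr)^{+}\le Y^{(t-h)}(h)\eid Y(h)\to 0$ in probability. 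Thus $Y(t\pm h)\to Y(t)$ in probability, i.e.\ continuity in probability on $(0,\infty)$. The step I expect to require the most care is part (a): one must not read an almost-sure identity directly off a distributional one, but instead extract it from the joint law \eqref{e:pair} together with the pathwise order $Y(s)\vee Y^{(s)}(t-s)\ge Y(s)$; the remaining parts are routine bookkeeping with one- and two-time marginals of \eqref{e:def.statmaxi} and with self-similarity.
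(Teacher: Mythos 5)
Your proof is correct and follows essentially the same route as the paper's: stochastic monotonicity from the max-increment identity gives $H\ge 0$, the $r=t=1$ instance with $\max(a,b)^p\le a^p+b^p$ gives (b), and the bound $P(Y(t)-Y(s)>\eta)\le P\bigl(Y^{(s)}(t-s)>\eta\bigr)=P\bigl((t-s)^HY(1)>\eta\bigr)$ gives (c). The only (cosmetic) difference is in (a), where you extract $Y(t)=Y(s)$ a.s.\ directly from the two-point marginal for arbitrary $0<s<t$ via the ``pointwise domination plus equality in law implies a.s.\ equality'' lemma, whereas the paper argues at integer times and then invokes monotonicity and self-similarity; your version is, if anything, slightly cleaner.
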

\begin{pf}
By the stationarity of max-increments, $Y(t)$ is stochastically
increasing with $t$. This implies that $H\geq0$.

If $H=0$, then $Y(n)\eid Y(1)$ for each $n=1,2\ldots$\,. We use
\eqref{e:def.statmaxi} with $r=1$. Using $t=1$ we see that, in the
right-hand side of \eqref{e:def.statmaxi}, $Y(1)=Y^{(1)}(1)$
a.s. Since $Y^{(1)}(n)\geq Y^{(1)}(1)$ a.s., we conclude, using $t=n$
in the
right-hand side of \eqref{e:def.statmaxi}, that $Y(1)=Y^{(1)}(n)$ a.s.
for each $n=1,2,\ldots$\,. By monotonicity, we
conclude that the process $ ( Y^{(1)}(t), t \geq0  )$,
hence also the process $ ( Y(t), t \geq0  )$,
is a.s. constant on $[1,\infty)$ and then, by self-similarity, also on
$(0,\infty)$.

Next, let $p>0$ be such that $0<EY(1)^p<\infty$. It follows from
\eqref{e:def.statmaxi} with $r=1$ that
\[
2^H Y(1)\eid Y(2)\eid\max \bigl( Y(1), Y^{(1)}(1) \bigr) .
\]
Therefore,
\[
2^{pH} EY(1)^p = E Y(2)^p = E \bigl[
Y(1)^p \vee Y^{(1)}(1)^p \bigr] \leq2
EY(1)^p. %
\]
This means that $pH\leq1$.

Finally, we take arbitrary $0<s<t$. We use \eqref{e:def.statmaxi} with
$r=s$. For every $\eta> 0$,
\begin{eqnarray*}
P \bigl( Y(t) - Y(s) > \eta \bigr) &=& P \bigl( Y(s) \vee Y^{(s)}(t-s) -
Y(s) > \eta \bigr)
\\
&\leq& P \bigl( Y^{(s)}(t-s) > \eta \bigr)
= P \bigl(
(t-s)^H Y(1) > \eta \bigr).
\end{eqnarray*}
Hence, continuity in probability.
\end{pf}

We now introduce a crucial object for the subsequent discussion,
which is the limiting process obtained in the main limit theorem
of Section~\ref{sec:FLTPM}. It has a somewhat deceptively simple
representation that we presently describe.

Let $\alpha>0$, and consider
the extremal Fr\'echet process $Z_{\alpha}(t), t\geq0$,
defined in \eqref{e:extreme.frechet}, with the scale $\sigma=1$. For
$0<\beta<1$, we define a new stochastic process by
%
\begin{equation}
\label{e:lim.process} Z_{\alpha, \beta}(t) = Z_{\alpha} \bigl(t^\beta
\bigr),\quad\quad t\geq0 .
\end{equation}
We will refer to this process as the {\it time scaled extremal Fr\'echet
process}.

The next proposition places this process in the general framework
introduced earlier in this section.

%
\begin{proposition} \label{pr:limproc.properties}
The process $Z_{\alpha, \beta}$ in \eqref{e:lim.process}
is self-similar with $H=\beta/\alpha$ and has stationary max-increments.
\end{proposition}
\begin{pf}
Since the extremal Fr\'echet process is self-similar with
$H=1/\alpha$, it is immediately seen that the process $Z_{\alpha,
\beta}$ is self-similar with $H=\beta/\alpha$.

To show the stationarity of max-increments, we start with a useful
representation of the extremal Fr\'echet process $Z_{\alpha}(t),
t\geq0$ in terms of the points of a Poisson random measure. Let
$ ((j_k,s_k) )$ be the points of a Poisson random measure on
$\bbr_+^2$
with mean measure $\rho_{\alpha} \times\lambda$, where
$\rho_{\alpha}(x,\infty) = x^{-\alpha}$, $x>0$ and $\lambda$ is the
Lebesgue measure on $\bbr_+$. Then an elementary calculation shows
that
\[
\bigl( Z_{\alpha}(t), t\geq0 \bigr) \eid \bigl( \sup \{ j_k\dvtx
s_k\leq t \}, t\geq0 \bigr) . %
\]
Therefore, $ ( Z_{\alpha,\beta}(t), t\geq0 ) \eid (
U_{\alpha,\beta}(t), t\geq0 ) $, where
%
\begin{equation}
\label{e:represent.U} U_{\alpha,\beta}(t) = \sup \bigl\{ j_k \dvtx
s_k \leq t^{\beta} \bigr\} ,\quad\quad t \geq0 .
\end{equation}

Given $r > 0$, we define
\[
U_{\alpha,\beta}^{(r)}(t) = \sup \bigl\{ j_k \dvtx
(t+r)^{\beta}-t^{\beta} \leq s_k \leq(t+r)^{\beta}
\bigr\} . %
\]
Since $0 < \beta< 1$, we have
\[
\bigl( (t_1+r)^{\beta}-t_1^{\beta},
(t_1+r)^{\beta} \bigr) \subset \bigl( (t_2+r)^{\beta}-t_2^{\beta},
(t_2+r)^{\beta} \bigr) %
\]
for $0\leq t_1<t_2$. The nested nature of these sets
implies that
\[
\bigl( U_{\alpha,\beta}^{(r)}(t), t \geq0 \bigr) \stackrel{d} {=}
\bigl( U_{\alpha,\beta}(t), t \geq0 \bigr) , %
\]
because only the obvious equality of the one-dimensional
distributions must be checked.
Furthermore, since $(t+r)^{\beta}-t^{\beta}\leq r^\beta$, we see that
\[
U_{\alpha,\beta}(t+r) = U_{\alpha,\beta}(r) \vee U_{\alpha,\beta
}^{(r)}(t)\quad\quad
\mbox{for all } t \geq0 . %
\]
This means that the process $U_{\alpha,\beta}$ has stationary
max-increments and, hence, so does the process~$Z_{\alpha,\beta}$.
\end{pf}

Note that the max-increment process $ ( U_{\alpha,\beta}^{(r)}(t)
)$ in the proof of Proposition \ref{pr:limproc.properties}
is not independent of the random variable
$U_{\alpha,\beta}(r)$ if $\beta<1$. The case $\beta=1$ corresponds to
the extremal Fr\'echet process, whose max-increments are both
stationary and independent.

It is interesting to note that, by part (b) of Proposition
\ref{pr:sssmaxi}, any $H$-self-similar process with stationary
max-increments and $\alpha$-Fr\'echet marginals, must satisfy $H\leq
1/\alpha$. The exponent $H=\beta/\alpha$ with $0<\beta\leq1$ of the
process $Z_{\alpha,\beta}$ (with $\beta=1$ corresponding to the
extremal Fr\'echet process $Z_{\alpha}$) covers the entire interval
$(0,1/\alpha]$. Therefore, the upper bound of part (b) of Proposition
\ref{pr:sssmaxi} is, in general, the best possible.

We finish this section by mentioning that an immediate conclusion from
\eqref{e:represent.U} is the following representation of
the time scaled extremal Fr\'echet process
$Z_{\alpha, \beta}$ on the interval $[0,1]$:
%
\begin{equation}
\label{e:represent.Z} \bigl( Z_{\alpha, \beta}(t), 0 \leq t \leq1 \bigr) \stackrel{d}
{=} \Biggl( \bigvee_{j=1}^{\infty}
\Gamma_j^{-1/\alpha} \one_{\{ V_j
\leq t
\}}, 0 \leq t \leq1 \Biggr)
,
\end{equation}
where $\Gamma_j$, $j=1,2,\ldots$\,, are arrival times of a unit rate
Poisson process on $(0,\infty)$, and $(V_j)$ are i.i.d. random
variables with $P(V_1 \leq x) = x^{\beta}$, $0 < x \leq1$,
independent of $(\Gamma_j)$.

\section{A functional limit theorem for partial maxima} \label{sec:FLTPM}

In this section, we state and prove our main result, a functional limit
theorem for the partial maxima of the discrete-time stationary
process $\BX=(X_1,X_2,\ldots)$
given in (\ref{e:underlying.proc}). Recall that $T$ is a conservative,
ergodic and
measure preserving map on a $\sigma$-finite, infinite, measure space
$(E,\mathcal{E},\mu)$. We will assume that $T$ is a pointwise dual
ergodic map with normalizing sequence $(a_n)$ that is regularly
varying with exponent $1-\beta$; equivalently, the wandering sequence
$(w_n)$ in \eqref{e:wanderingrate} is assumed to be regularly
varying with exponent $\beta$. Crucially, we will assume that
$1/2 < \beta< 1$. See Remark \ref{rk:other.beta} after the proof
of Theorem \ref{t:main.max} below.

Define
%
\begin{equation}
\label{e:b.n} b_n = \biggl( \int_E \max
_{1 \leq k \leq n} \bigl| f\circ T^n(x) \bigr|^{\alpha} \mu(\mathrm{d}x)
\biggr)^{1/\alpha}, \quad\quad n=1,2,\ldots.
\end{equation}
The sequence $(b_n)$ is known to play an important role in the rate of
growth of partial maxima of an $\alpha$-stable process of the type
\eqref{e:underlying.proc}. It also turns out to be a proper
normalizing sequence for our functional limit theorem. In
Samorodnitsky \cite{samorodnitsky:2004a} it was shown that, for a
canonical kernel
(\ref{e:general.rosinski}), if the map $T$ is
conservative, then the sequence $(b_n)$ grows at a rate strictly
slower than $n^{1/\alpha}$. The extra assumptions imposed in the current
paper will guarantee a more precise statement. We will prove that, in
fact, $(b_n) \in RV_{\beta/\alpha}$ and, more specifically,
%
\begin{equation}
\label{e:RV.exp.bn} \lim_{n\to\infty}\frac{b_n^\alpha}{ w_n}=\| f
\|_\infty
\end{equation}
(where $(w_n)$ is the wandering sequence).
This fact has an interesting message, because it explicitly shows
that the rate of growth of the partial maxima is determined both by
the heaviness of the marginal tails (through $\alpha$) and by the
length of memory (through $\beta$). Such a precise measure of the
length of memory is not present in Samorodnitsky \cite{samorodnitsky:2004a}.

In contrast, if the map $T$ has a nontrivial dissipative component,
then the sequence $(b_n)$ grows at the rate $n^{1/\alpha} $, and so
do the partial maxima of the stationary \SaS\ process; see
Samorodnitsky~\cite{samorodnitsky:2004a}. This is the limiting case of
the setup in
the present paper, as $\beta$ gets closer to $1$. Intuitively, the
smaller is $\beta$, the longer is the memory in the process.

The basic idea in the proof of our main result, Theorem
\ref{t:main.max} below, is similar to the idea in the proof of
Theorems 3.1 and 4.1 in Samorodnitsky \cite{samorodnitsky:2004a} and is
based on a
Poisson representation of the process and a ``single jump'' property;
see Remark \ref{rk:other.beta}.

We recall the tail constant of an $\alpha$-stable random variable
given by
\[
C_{\alpha} = \biggl( \int_0^{\infty}
x^{-\alpha} \sin x \,\mathrm{d}x \biggr)^{-1} = \cases{ (1-\alpha) /
 \bigl(
\Gamma(2-\alpha) \cos (\uppi \alpha/ 2) \bigr) &\quad \mbox{if } $\alpha\neq1$,
\cr
2 /
\uppi&\quad \mbox{if } $\alpha=1$; } %
\]
see Samorodnitsky and Taqqu \cite{samorodnitsky:taqqu:1994}.

%
\begin{theorem} \label{t:main.max}
Let $T$ be a conservative, ergodic and measure preserving map on a
$\sigma$-finite infinite measure space $(E,\mathcal{E},\mu)$. Assume
that $T$ is a pointwise dual ergodic map with normalizing
sequence $(a_n) \in RV_{1-\beta}$, $0\leq\beta\leq1$. Let $f\in
L^{\alpha}(\mu)\cap L^{\infty}(\mu)$, and assume that $f$ is
supported by
a uniform set $A$ for $T$, that is, $\operatorname{supp}(f) \subset A$. Let $\alpha
>0$. Then the sequence $(b_n)$
in \eqref{e:b.n} satisfies~\eqref{e:RV.exp.bn}.

Assume now that $0<\alpha<2$ and $1/2<\beta<1$. If $M$ is a S$\alpha$S
random measure on $(E,\mathcal{E})$ with
control measure $\mu$, then the stationary S$\alpha$S process $\BX$
given in (\ref{e:underlying.proc}) satisfies
%
\begin{equation}
\label{e:weak.conv} \biggl( \frac{1}{b_n} \max_{1 \leq k \leq\lfloor nt \rfloor}|X_k|,
t\geq0 \biggr) \quad\Rightarrow \quad\bigl( C_{\alpha}^{1/\alpha}
Z_{\alpha,
\beta
}(t), t\geq0 \bigr) \quad\quad\mbox{in } D[0,\infty)
\end{equation}
in the Skorohod $M_1$-topology. Moreover, if $f = \one_A$, then the
above convergence occurs in the Skorohod $J_1$-topology as well.
\end{theorem}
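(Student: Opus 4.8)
\medskip

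The plan is to use the series representation of the S$\alpha$S process and reduce the functional maxima to a Poissonian maximum that can be analyzed via Proposition~\ref{p:max.ergodic}. First I would fix a uniform set $A$ supporting $f$ and write, using the standard series representation of the integral with respect to a S$\alpha$S random measure with control measure $\mu$,
$$
\Bigl( \frac{1}{b_n}\max_{1\le k\le \lfloor nt\rfloor}|X_k|,\ t\ge 0\Bigr)
\eid \Bigl( \frac{C_\alpha^{1/\alpha}}{b_n}\max_{1\le k\le \lfloor nt\rfloor}
\Bigl| \sum_j \epsilon_j \Gamma_j^{-1/\alpha}\, \frac{f\circ T^k(U_j)}{(\text{density factor})}\Bigr|,\ t\ge 0\Bigr),
$$
where $(\Gamma_j)$ are Poisson arrival times, $(\epsilon_j)$ are i.i.d. Rademacher, and $(U_j)$ are i.i.d. according to a probability measure equivalent to $\mu$ restricted appropriately. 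The first reduction is to show that, asymptotically, only the single largest Poisson point matters: the contribution of the sum of all the other points to the maximum is negligible after dividing by $b_n$. This is exactly the heuristic flagged in the introduction, and it is made rigorous by a second-moment (or truncation) estimate for the tail of $\sum_{j\ge 2}$, combined with \eqref{e:RV.exp.bn} which pins down the order of $b_n$ as $(w_n\|f\|_\infty)^{1/\alpha}$; the point supported on $A$ contributes at spatial location $U_1$, and the relevant time index is governed by the first entrance time $\varphi_A(U_1)$.

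\medskip

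Granting this reduction, the process $b_n^{-1}\max_{1\le k\le\lfloor nt\rfloor}|X_k|$ is, to leading order, distributionally close to
$C_\alpha^{1/\alpha}\max_j \Gamma_j^{-1/\alpha}\, b_n^{-1} M_{\lfloor nt\rfloor}(f)(U_j)$, where $M_{\lfloor nt\rfloor}(f)$ is the running maximum from Section~\ref{sec:ergodic}. Conditionally on the spatial locations, each term $b_n^{-1}M_{\lfloor nt\rfloor}(f)(U_j)$ is handled by Proposition~\ref{p:max.ergodic}: under the law $\mu_n$ it converges in $D[0,1]$, in the $M_1$ topology, to $\|f\|_\infty \one_{\{V_\beta^{(j)}\le t\}}$ with $P(V_\beta^{(j)}\le x)=x^\beta$. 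Combining over $j$ with the normalization $b_n^\alpha\sim w_n\|f\|_\infty$ and matching the Poisson/Fr\'echet bookkeeping against \eqref{e:represent.Z}, one obtains exactly $C_\alpha^{1/\alpha}\bigvee_{j\ge 1}\Gamma_j^{-1/\alpha}\one_{\{V_j\le t\}} \eid C_\alpha^{1/\alpha}Z_{\alpha,\beta}(t)$ on $[0,1]$; self-similarity of $Z_{\alpha,\beta}$ and a standard patching argument extend the convergence to $D[0,\infty)$. The convergence should be established first in finite-dimensional distributions and then upgraded to functional convergence; since each prelimit path is nondecreasing (being a running maximum), tightness in the $M_1$ topology reduces to control of the finite-dimensional distributions plus tightness of the maximal jump, and the limit $Z_{\alpha,\beta}$ is itself nondecreasing, so $M_1$-convergence follows from the monotone-function criterion used in the proof of Proposition~\ref{p:max.ergodic}.

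\medskip

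The main obstacle, I expect, is the single-big-jump reduction: controlling uniformly in $t\in[0,1]$ (equivalently, uniformly over the index range $1\le k\le n$) the error made by discarding $\sum_{j\ge 2}\epsilon_j\Gamma_j^{-1/\alpha}(\cdots)$, and more subtly the interaction term between the leading point and the rest, so that the maximum over $k$ of the full sum is genuinely asymptotic to the maximum over $k$ of the leading term alone. This requires a tail bound for the partial-maxima functional $M_n(f)$ of the form $\mu(M_n(f)>\lambda b_n)\to \|f\|_\infty\lambda^{-\alpha}\cdot(\text{const})$ together with a matching upper bound for the second-largest contribution — precisely the type of estimate that the uniform-set hypothesis and the regular variation $a_n\in RV_{1-\beta}$, $\beta>1/2$, are designed to supply (the condition $\beta>1/2$ is what makes $b_n$ large enough relative to the fluctuations of the discarded sum). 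Finally, for the strengthening to the $J_1$ topology when $f=\one_A$: here $M_{\lfloor nt\rfloor}(f)$ takes only the two values $0$ and $1$ before normalization, so the running maximum of the dominant term is a genuine step function making a single jump, there is no ``slow climb'' that $M_1$ would have to accommodate, and one checks that the locations of the jumps of the prelimit converge to those of $\bigvee_j\Gamma_j^{-1/\alpha}\one_{\{V_j\le t\}}$, giving $J_1$-convergence by the standard criterion for convergence of step functions.
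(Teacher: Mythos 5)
Your proposal follows essentially the same route as the paper's proof: the LePage-type series representation, the single-large-jump reduction (the paper's condition \eqref{e:single.poisson.jump}, imported from \cite{samorodnitsky:2004a} and exactly where $\beta>1/2$ enters), Proposition \ref{p:max.ergodic} to identify the finite-dimensional limit via the void probabilities of the Poisson random measure, monotonicity of sample paths to upgrade finite-dimensional convergence to $M_1$, and the single-step structure of $\one_A$ for the $J_1$ strengthening. Two caveats: you never prove the first assertion of the theorem, namely \eqref{e:RV.exp.bn} itself (the paper sandwiches $b_n^\alpha$ between $(\|f\|_\infty-\epsilon)\,w_n^{(\epsilon)}$ and $\|f\|_\infty\, w_n$, using that the level set $B_\epsilon$ is again a uniform set with the same normalizing sequence); and a literal second-moment bound on $\sum_{j\ge 2}$ would fail, both because $E\,\Gamma_2^{-2/\alpha}=\infty$ for $\alpha\le 1$ and because the union bound over $n$ time points gives $n/b_n^2\in RV_{1-2\beta/\alpha}$, which need not vanish when $2\beta\le\alpha$ --- the paper instead truncates the first $K$ terms with $K+1>4/\alpha$ and applies a fourth-moment (Markov) bound, for which $n/b_n^4\to 0$ precisely because $\beta>1/2>\alpha/4$.
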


%
\begin{remark}
The functional limit theorem in Theorem \ref{t:main.max} above, once
again, involves weak convergence in two different topologies, that is,
the Skorohod $J_1$-topology and the Skorohod $M_1$-topology. The
issue is similar to that in Proposition \ref{p:max.ergodic}; see
Remark \ref{rk:topologies}.
\end{remark}
\begin{pf*}{Proof of Theorem \ref{t:main.max}}
We start with verifying \eqref{e:RV.exp.bn}. Obviously,
\[
b_n^{\alpha} \leq \| f \|_{\infty} \mu(
\varphi_A \leq n) , %
\]
and, recalling that $w_n \sim\mu(\varphi_A \leq n)$, we get the upper
bound
\[
\limsup_{n\to\infty}\frac{b_n^\alpha}{w_n}\leq\| f\|_\infty.
\]
On the other hand, take an arbitrary $\epsilon\in ( 0,\|
f \|_{\infty} )$. The set
\[
B_{\epsilon} = \bigl\{x \in A\dvtx  \bigl|f(x)\bigr| \geq \| f \|_{\infty} -
\epsilon \bigr\}  %
\]
is a uniform set for $T$. A lower bound for $b_n^{\alpha}$ is obtained
via the obvious inequality
\[
b_n^{\alpha} \geq \bigl( \| f \|_{\infty} - \epsilon \bigr) \mu
\Biggl( \bigcup_{j=1}^n T^{-j}
B_{\epsilon} \Biggr) . %
\]
Indeed, let $ ( w_n^{(\epsilon)} )$ be the corresponding wandering
rate sequence to the set $B_{\epsilon}$. As argued in Proposition \ref
{p:max.ergodic}, we know
that $w_n \sim w_n^{(\epsilon)} \sim\mu(\varphi_{B_{\epsilon}} \leq
n)$. Therefore,
\[
\liminf_{n\to\infty}\frac{b_n^\alpha}{ w_n} = \liminf
_{n\to\infty}\frac{b_n^\alpha}{\mu(\varphi_{B_{\epsilon}}
\leq n)} \geq \| f \|_{\infty} -
\epsilon. %
\]
Letting $\epsilon\to0$, we obtain \eqref{e:RV.exp.bn}.

Suppose now that $0<\alpha<2$ and $1/2<\beta<1$. We continue with
proving convergence in the finite-dimensional distributions in
\eqref{e:weak.conv}. Since for random elements in $D[0,\infty)$ with
nondecreasing sample paths, weak convergence in the $M_1$-topology is
implied by the finite-dimensional weak convergence, this will also
establish \eqref{e:weak.conv} in the sense of weak convergence in the
$M_1$-topology.

Fix $0 = t_0< t_1 < \cdots< t_d$, $d \geq1$. We may and will assume
that $t_d \leq1$. We use a series representation of the random vector
$(X_1, \ldots, X_n)$: with $f_k=f\circ T^k$, $k=1,2,\ldots$\,,
%
\begin{equation}
\label{e:series.max} (X_k, k=1,\ldots,n) \stackrel{d} {=} \Biggl(
b_n C_{\alpha}^{1/\alpha} \sum
_{j=1}^{\infty} \epsilon_j
\Gamma_j^{-1/\alpha} \frac{f_k(U_j^{(n)})}{\max_{1 \leq i \leq n}
|f_i(U_j^{(n)})|}, k=1,\ldots,n \Biggr) .
\end{equation}
Here $(\epsilon_j)$ are i.i.d. Rademacher random variables
(symmetric random variables with values $\pm
1$), $(\Gamma_j)$ are the arrival times of a unit rate Poisson
process on $(0,\infty)$, and $(U_j^{(n)})$ are i.i.d. $E$-valued random
variables with the common law $\eta_n$ defined by
%
\begin{equation}
\label{e:eta.n} \frac{\mathrm{d}\eta_n}{\mathrm{d}\mu}(x) =
\frac{1}{b_n^{\alpha}} \max
_{1 \leq k
\leq n} \bigl|f_k(x)\bigr|^{\alpha} ,\quad\quad x \in E .
\end{equation}
The sequences $(\epsilon_j)$, $(\Gamma_j)$, and $(U_j^{(n)})$ are
taken to be independent. We refer to Section~3.10 of
Samorodnitsky and Taqqu \cite{samorodnitsky:taqqu:1994} for series
representations of
$\alpha$-stable random vectors. The representation
\eqref{e:series.max} was also used in Samorodnitsky \cite
{samorodnitsky:2004a}, and
the argument below is structured similarly to the corresponding
argument ibid.

The crucial consequence of the assumption $1/2<\beta<1$ is that, in
the series representation~(\ref{e:series.max}), only the largest
Poisson jump will play an important role. It is shown in
Samorodnitsky \cite{samorodnitsky:2004a} that, under the assumptions
of Theorem
\ref{t:main.max}, for every $\eta>0$,
%
\begin{eqnarray}
\label{e:single.poisson.jump} \varphi_n(\eta) &\equiv& P \Biggl( \bigcup
_{k=1}^n \biggl\{ \Gamma_j^{-1/\alpha}
\frac{|f_k(U_j^{(n)})|}{\max_{1 \leq i \leq
n}|f_i(U_j^{(n)})|} > \eta \nonumber
\\[-8pt]\\[-8pt]
&&\hphantom{P \Biggl(\bigcup
_{k=1}^n \biggl\{}{}\mbox{for at least 2 different } j=1,2,\ldots \biggr\}
\Biggr) \to0\nonumber
\end{eqnarray}
as $n \to\infty$.

We will proceed in two steps. First, we will prove that
%
\begin{eqnarray}
\label{e:fidi.Malphabeta} &&\Biggl( \bigvee_{j=1}^{\infty}
\Gamma_j^{-1/\alpha} \frac{\max_{1
\leq
k \leq\lfloor nt_i \rfloor} |f_k(U_j^{(n)})|}{\max_{1 \leq k
\leq n} |f_k(U_j^{(n)})|}, i=1,\ldots,d \Biggr)
\nonumber
\\[-8pt]\\[-8pt]
&&\quad\Rightarrow\quad \bigl( Z_{\alpha, \beta}(t_i),
i=1,\ldots,d \bigr) \quad\quad\mbox{in
} \bbr_+^d .\nonumber
\end{eqnarray}
Next, we will prove that, for fixed $\lambda_1, \ldots, \lambda_d >
0$, for every $0<\delta<1$,
%
\begin{eqnarray}\label{e:upp.bdd.max}
&&P \Bigl( b_n^{-1} \max_{1 \leq k \leq\lfloor nt_i \rfloor}
|X_k| > \lambda_i, i=1,\ldots,d \Bigr)
\nonumber
\\[-8pt]\\[-8pt]
&&\quad\leq P \Biggl( C_{\alpha}^{1/\alpha} \bigvee
_{j=1}^{\infty} \Gamma_j^{-1/\alpha}
\frac{\max_{1 \leq k \leq\lfloor nt_i
\rfloor}|f_k(U_j^{(n)})|}{\max_{1 \leq k \leq
n}|f_k(U_j^{(n)})|} > \lambda_i(1-\delta), i=1,\ldots,d \Biggr) + \mathrm{o}(1)
\nonumber
\end{eqnarray}
and that
%
\begin{eqnarray}\label{e:lower.bdd.max}
&&P \Bigl( b_n^{-1} \max_{1 \leq k \leq\lfloor nt_i \rfloor}
|X_k| > \lambda_i, i=1,\ldots,d \Bigr)
\nonumber
\\[-8pt]\\[-8pt]
&&\quad\geq P \Biggl( C_{\alpha}^{1/\alpha} \bigvee
_{j=1}^{\infty} \Gamma_j^{-1/\alpha}
\frac{\max_{1 \leq k \leq\lfloor nt_i
\rfloor}|f_k(U_j^{(n)})|}{\max_{1 \leq k \leq
n}|f_k(U_j^{(n)})|} > \lambda_i(1+\delta), i=1,\ldots,d \Biggr) + \mathrm{o}(1)
.\nonumber
\end{eqnarray}
Since the Fr\'echet distribution is continuous, the weak convergence
\[
\Bigl( b_n^{-1} \max_{1 \leq k \leq\lfloor nt_i \rfloor}
|X_k|, i=1,\ldots,d \Bigr)\quad \Rightarrow\quad \bigl( Z_{\alpha, \beta}(t_i),
i=1,\ldots,d \bigr) \quad\quad\mbox{in } \bbr_+^d %
\]
will follow by taking $\delta$ arbitrarily small.

We start with proving \eqref{e:fidi.Malphabeta}. For $n=1,2,\ldots$\,,
$N_n=\sum_{j=1}^{\infty} \delta_{(\Gamma_j, U_j^{(n)})}$ is a Poisson
random measure on $(0,\infty)\times\bigcup_{k=1}^{n} T^{-k}A$
with mean measure $\lambda\times\eta_n$. Define a map $S_n\dvtx \bbr_+
\times\bigcup_{k=1}^{n} T^{-k}A \to\bbr_+^d$ by
\[
S_n(r,x) = r^{-1/\alpha} \bigl( M_n(f) (x)
\bigr)^{-1} \bigl( M_{\lfloor nt_1 \rfloor}(f) (x), \ldots, M_{\lfloor nt_d \rfloor}(f)
(x) \bigr) ,\qquad r > 0, x \in\bigcup_{k=1}^{n} T^{-k}A .
\]
Then, for $\lambda_1, \ldots, \lambda_d > 0$,
\begin{eqnarray*}
&&P \Biggl( \bigvee_{j=1}^{\infty}
\Gamma_j^{-1/\alpha} \frac{\max_{1
\leq k \leq\lfloor nt_i \rfloor} |f_k(U_j^{(n)})|}{\max_{1 \leq
k \leq n} |f_k(U_j^{(n)})|} \leq\lambda_i
, i=1,\ldots,d \Biggr)
\\
&&\quad= P \bigl[ N_n \bigl( S_n^{-1} \bigl(  (0,
\lambda_1  ] \times\cdots \times (0,\lambda_d]
\bigr)^c \bigr) = 0 \bigr]
\\
&&\quad= \exp \bigl\{ - (\lambda\times\eta_n) \bigl( S_n^{-1}
\bigl(  (0,\lambda_1] \times\cdots\times(0,\lambda_d]
\bigr)^c  \bigr) \bigr\}
\\
&&\quad= \exp \Biggl\{ - (\lambda\times\eta_n) \Biggl\{(r,x)\dvtx  \bigvee
_{j=1}^d \lambda_j^{-\alpha}
\frac{ (M_{\lfloor nt_j
\rfloor}(f)(x) )^{\alpha}}{ (M_n(f)(x) )^{\alpha}} > r \Biggr\} \Biggr\}
\\
&&\quad= \exp \Biggl\{ -b_n^{-\alpha} \int_E
\bigvee_{j=1}^d \lambda
_j^{-\alpha
} M_{\lfloor nt_j \rfloor}(f)^{\alpha} \,\mathrm{d}\mu \Biggr\}
.
\end{eqnarray*}
We use \eqref{e:RV.exp.bn} and the weak convergence in
Proposition \ref{p:max.ergodic} to obtain
\begin{eqnarray*}
&&b_n^{-\alpha} \int_E \bigvee
_{j=1}^d \lambda_j^{-\alpha}
M_{\lfloor
nt_j \rfloor}(f)^{\alpha} \,\mathrm{d}\mu
 \sim\| f \|_{\infty}^{-1}
\int_E \bigvee_{j=1}^d
\lambda _j^{-\alpha} M_{\lfloor nt_j
\rfloor}(f)^{\alpha} \,\mathrm{d}
\mu_n %
\\
&&\quad\to\quad\int_{\Omega^{\prime}} \bigvee_{j=1}^d
\lambda_j^{-\alpha} \one_{\{ V_{\beta} \leq t_j \}} \,\mathrm{d}P^{\prime} =
\sum_{i=1}^d \bigl(t_i^{\beta}
- t_{i-1}^{\beta} \bigr) \Biggl( \bigwedge
_{j=i}^d \lambda_j
\Biggr)^{-\alpha}. %
\end{eqnarray*}
Therefore,
\begin{eqnarray*}
&&P \Biggl( \bigvee_{j=1}^{\infty}
\Gamma_j^{-1/\alpha} \frac{\max_{1
\leq k \leq\lfloor nt_i \rfloor} |f_k(U_j^{(n)})|}{\max_{1 \leq k
\leq
n} |f_k(U_j^{(n)})|} \leq\lambda_i
, i=1,\ldots,d \Biggr)
\\
&&\quad\to\quad\exp \Biggl\{ - \sum_{i=1}^d
\bigl(t_i^{\beta} - t_{i-1}^{\beta} \bigr)
\Biggl( \bigwedge_{j=i}^d
\lambda_j \Biggr)^{-\alpha} \Biggr\} = P \bigl(
Z_{\alpha, \beta}(t_i) \leq\lambda_i, i=1,\ldots,d \bigr)
.
\end{eqnarray*}
The claim (\ref{e:fidi.Malphabeta}) has, consequently, been proved.

We continue with the statements (\ref{e:upp.bdd.max}) and (\ref
{e:lower.bdd.max}).
Since the arguments are very similar, we only prove (\ref{e:upp.bdd.max}).
Let $K \in\bbn$ and $0 < \epsilon< 1$ be constants so that
\[
K+1 > \frac{4}{\alpha} \quad\mbox{and}\quad \delta- \epsilon K > 0 . %
\]
Then
\begin{eqnarray*}
&&P \Bigl( b_n^{-1} \max_{1 \leq k \leq\lfloor nt_i \rfloor}
|X_k| > \lambda_i, i=1,\ldots,d \Bigr)
\\
&&\quad\leq P \Biggl( C_{\alpha}^{1/\alpha} \bigvee
_{j=1}^{\infty} \Gamma _j^{-1/\alpha}
\frac{\max_{1 \leq k \leq\lfloor nt_i \rfloor
}|f_k(U_j^{(n)})|}{\max_{1 \leq k \leq n}|f_k(U_j^{(n)})|} > \lambda _i(1-\delta), i=1,\ldots,d \Biggr)
\\
&&\quad\quad{}+ \varphi_n \Bigl( C_{\alpha}^{-1/\alpha} \epsilon\min
_{1 \leq i \leq d} \lambda_i \Bigr) + \sum
_{i=1}^d \psi_n(\lambda_i,
t_i) ,
\end{eqnarray*}
where
\begin{eqnarray*}
\psi_n(\lambda, t) &=& P \Biggl( C_{\alpha}^{1/\alpha} \max
_{1 \leq k
\leq\lfloor nt \rfloor} \Biggl\llvert \sum_{j=1}^{\infty}
\epsilon_j \Gamma _j^{-1/\alpha} \frac{f_k(U_j^{(n)})}{\max_{1 \leq i \leq
n}|f_i(U_j^{(n)})|}
\Biggr\rrvert > \lambda,
\\
&&\hphantom{P \Biggl(}{}C_{\alpha}^{1/\alpha} \bigvee_{j=1}^{\infty}
\Gamma_j^{-1/\alpha} \frac
{\max_{1 \leq k \leq\lfloor nt \rfloor}|f_k(U_j^{(n)})|}{\max_{1
\leq
k \leq n}|f_k(U_j^{(n)})|} \leq\lambda(1-\delta) ,
\mbox{ and for each } m=1,\ldots,n,
\\
&&\hphantom{P \Biggl(}{}C_{\alpha}^{1/\alpha} \Gamma_j^{-1/\alpha}
\frac
{|f_m(U_j^{(n)})|}{\max_{1 \leq i \leq n}|f_i(U_j^{(n)})|} >
\epsilon\lambda \mbox{ for at most one } j=1,2,\ldots \Biggr) .
\end{eqnarray*}
By (\ref{e:single.poisson.jump}), it is enough to show that
%
\begin{equation}
\label{e:neg.term} \psi_n(\lambda, t) \to0
\end{equation}
for all $\lambda>0$ and $0 \leq t \leq1$.

For every $k=1,2,\ldots,n$, the Poisson random measure represented by
the points
\[
\Bigl( \epsilon_j \Gamma_j^{-1/\alpha}
f_k \bigl(U_j^{(n)} \bigr) \Bigl( \max
_{1
\leq i \leq n}\bigl|f_i \bigl(U_j^{(n)}
\bigr)\bigr| \Bigr)^{-1}, j=1,2,\ldots \Bigr) %
\]
has the same mean measure as that represented by the points
\[
\bigl( \epsilon_j \Gamma_j^{-1/\alpha} \| f
\|_{\alpha} b_n^{-1}, j=1,2,\ldots \bigr) , %
\]
where $\| f \|_{\alpha} =  ( \int_E |f|^{\alpha} \,\mathrm{d}\mu
)^{1/\alpha}$.
In fact, the common mean measure assigns the value $x^{-\alpha} \| f
\|_{\alpha}^{\alpha} / 2$ to the sets $(x,\infty)$ and $(-\infty, -x)$
for every $x>0$. Therefore, these two Poisson random measures coincide
distributionally. We conclude that the probability in (\ref
{e:neg.term}) is bounded by
\begin{eqnarray*}
&&\sum_{k=1}^{\lfloor nt \rfloor} P \Biggl(
C_{\alpha}^{1/\alpha} \Biggl\llvert \sum_{j=1}^{\infty}
\epsilon_j \Gamma_j^{-1/\alpha} \frac
{f_k(U_j^{(n)})}{\max_{1 \leq i \leq n}|f_i(U_j^{(n)})|}
\Biggr\rrvert > \lambda,%
\\
&&\hphantom{\sum_{k=1}^{\lfloor nt \rfloor} P \Biggl(}{} C_{\alpha}^{1/\alpha} \bigvee
_{j=1}^{\infty} \Gamma _j^{-1/\alpha}
\frac{f_k(U_j^{(n)})}{\max_{1 \leq i \leq
n}|f_i(U_j^{(n)})|} \leq\lambda(1-\delta) , %
\\
&&\hphantom{\sum_{k=1}^{\lfloor nt \rfloor} P \Biggl(}{}C_{\alpha}^{1/\alpha} \Gamma_j^{-1/\alpha}
\frac
{|f_k(U_j^{(n)})|}{\max_{1 \leq i \leq n}|f_i(U_j^{(n)})|} > \epsilon\lambda \mbox{ for at most one } j=1,2,\ldots \Biggr)
\\
&&\quad= \lfloor nt \rfloor P \Biggl( C_{\alpha}^{1/\alpha} \Biggl\llvert
\sum_{j=1}^{\infty} \epsilon_j
\Gamma_j^{-1/\alpha} \Biggr\rrvert > \lambda \| f
\|_{\alpha}^{-1} b_n , C_{\alpha}^{1/\alpha}
\bigvee_{j=1}^{\infty
} \Gamma_j^{-1/\alpha}
\leq\lambda(1-\delta) \| f \|_{\alpha}^{-1} b_n ,
\\
&&\hphantom{\quad= \lfloor nt \rfloor P \Biggl(}{}C_{\alpha}^{1/\alpha} \Gamma_j^{-1/\alpha} > \epsilon
\lambda\| f \| _{\alpha}^{-1} b_n \mbox{ for at most
one } j=1,2,\ldots \Biggr)
\\
&&\quad\leq n P \Biggl( C_{\alpha}^{1/\alpha} \Biggl\llvert \sum
_{j=K+1}^{\infty} \epsilon_j
\Gamma_j^{-1/\alpha} \Biggr\rrvert > (\delta- \epsilon K) \lambda
\| f \|_{\alpha}^{-1} b_n \Biggr)
\\
&&\quad\leq\frac{n \| f \|_{\alpha}^4 C_{\alpha}^{4/\alpha} }{(\delta-
\epsilon K)^4 \lambda^4 b_n^4} E \Biggl\llvert \sum_{j=K+1}^{\infty}
\epsilon _j \Gamma_j^{-1/\alpha} \Biggr\rrvert
^4 .
\end{eqnarray*}
Due to the choice $K+1>4/\alpha$,
\[
E \Biggl\llvert \sum_{j=K+1}^{\infty}
\epsilon_j \Gamma_j^{-1/\alpha} \Biggr\rrvert
^4 < \infty; %
\]
see Samorodnitsky \cite{samorodnitsky:2004a} for a detailed proof.
Since $n/b_n^4
\to0$ as $n \to\infty$, (\ref{e:neg.term}) follows.

Suppose now that $f = \one_A$. In that case, the probability measure
$\eta_n$ defined in \eqref{e:eta.n} coincides with the probability
measure $\mu_n$ of Proposition \ref{p:max.ergodic}.
In order to prove weak convergence in
the $J_1$-topology, we will use a truncation argument. We may and will
restrict ourselves to the space $D[0,1]$. Let
$K=1,2,\ldots$\,. First of all, we show, in the
notation of \eqref{e:represent.Z}, the convergence\looseness=1
%
\begin{eqnarray}
\label{e:fidi.truncate} &&\Biggl( C_{\alpha}^{1/\alpha} \max_{1 \leq k \leq\lfloor nt \rfloor}
\Biggl\llvert \sum_{j=1}^K
\epsilon_j \Gamma_j^{-1/\alpha} \one_A
\circ T^k \bigl(U_j^{(n)} \bigr) \Biggr\rrvert ,
0 \leq t\leq1 \Biggr)\nonumber
\\[-8pt]\\[-8pt]
&&\quad\Rightarrow \quad\Biggl( C_{\alpha}^{1/\alpha} \bigvee
_{j=1}^K \Gamma_j^{-1/\alpha}
\one_{\{ V_j \leq t \}}, 0\leq t\leq1 \Biggr)\nonumber
\end{eqnarray}
in the $J_1$-topology on $D[0,1]$. Indeed, by
\eqref{e:single.poisson.jump}, outside of an event of asymptotically
vanishing probability, the process in the left-hand side of
\eqref{e:fidi.truncate} is
%
\begin{equation}
\label{e:fidi.truncate1} \Biggl( C_{\alpha}^{1/\alpha} \bigvee
_{j=1}^K \Gamma_j^{-1/\alpha} \max
_{1 \leq k \leq\lfloor nt \rfloor} \one_A \circ T^k
\bigl(U_j^{(n)} \bigr), 0\leq t\leq1 \Biggr) .
\end{equation}
By Proposition \ref{p:max.ergodic}, we can put all the random
variables involved on the same probability space so that the time of
the single step in the $j$th term in \eqref{e:fidi.truncate1}
converges a.s. for each $j=1,\ldots, K$ to $V_j$. Then, trivially, the
process in \eqref{e:fidi.truncate1} converges a.s. in the
$J_1$-topology on $D[0,1]$ to the process in the right-hand side
of \eqref{e:fidi.truncate}. Therefore, the weak convergence in
\eqref{e:fidi.truncate} follows.\looseness=1

Next, we note that in the $J_1$-topology on the space $D[0,1]$,
\begin{eqnarray*}
&&\Biggl( C_{\alpha}^{1/\alpha} \bigvee_{j=1}^K
\Gamma_j^{-1/\alpha} \one_{\{ V_j \leq t \}}, 0\leq t\leq1 \Biggr)
\\
&&\quad\to\quad \Biggl( C_{\alpha}^{1/\alpha} \bigvee_{j=1}^{\infty}
\Gamma_j^{-1/\alpha} \one_{\{ V_j \leq t_i \}} 0\leq t\leq1 \Biggr)
\quad\quad\mbox{as } K \to\infty \mbox{ a.s.} %
\end{eqnarray*}
This is so because, as $K \to\infty$,
\begin{eqnarray*}
&&\sup_{0 \leq t \leq1} \Biggl( \bigvee_{j=1}^{\infty}
\Gamma_j^{-1/\alpha} \one_{\{ V_j \leq t \}} - \bigvee
_{j=1}^K \Gamma_j^{-1/\alpha}
\one_{\{ V_j \leq t \}} \Biggr)
\\
&&\quad\leq\Gamma_{K+1}^{-1/\alpha} \to0
\quad\quad\mbox{a.s.}
\end{eqnarray*}

According to Theorem 3.2 in Billingsley \cite{billingsley:1999}, the
$J_1$-convergence in \eqref{e:weak.conv} will follow once we show that
\[
\lim_{K \to\infty} \limsup_{n \to\infty} P \Biggl( \max
_{1 \leq k
\leq n} \Biggl\llvert \sum_{j=K+1}^{\infty}
\epsilon_j \Gamma_j^{-1/\alpha} \one_A
\circ T^k \bigl(U_j^{(n)} \bigr) \Biggr\rrvert >
\epsilon \Biggr) = 0 %
\]
for every $\epsilon>0$. Write
\begin{eqnarray*}
&&P \Biggl( \max_{1 \leq k \leq n} \Biggl\llvert \sum
_{j=K+1}^{\infty} \epsilon_j
\Gamma_j^{-1/\alpha} \one_A \circ T^k
\bigl(U_j^{(n)} \bigr) \Biggr\rrvert > \epsilon \Biggr)
\\
&&\quad\leq\int_0^{(\epsilon/2)^{-\alpha}} \mathrm{e}^{-x}
\frac{x^{K-1}}{(K-1)!} \,\mathrm{d}x
\\
&&\hphantom{\quad\leq}{} + \int_{(\epsilon/2)^{-\alpha}}^{\infty}
\mathrm{e}^{-x} \frac{x^{K-1}}{(K-1)!}
\\
&&\hphantom{\quad\leq+ \int_{(\epsilon/2)^{-\alpha}}^{\infty}}{}
\times P \Biggl( \max_{1 \leq k \leq n} \Biggl
\llvert \sum_{j=1}^{\infty}
\epsilon_j ( \Gamma_j + x )^{-1/\alpha}
\one_A \circ T^k \bigl(U_j^{(n)}
\bigr) \Biggr\rrvert > \epsilon \Biggr) \,\mathrm{d}x .
\end{eqnarray*}
Clearly, the first term vanishes when $K \to\infty$. Therefore, it
is sufficient to show that for every $x \geq(\epsilon/2)^{-\alpha}$,
%
\begin{equation}
\label{e:upper.app.Gamma1} P \Biggl( \max_{1 \leq k \leq n} \Biggl\llvert \sum
_{j=1}^{\infty} \epsilon_j (
\Gamma_j + x )^{-1/\alpha} \one_A \circ
T^k \bigl(U_j^{(n)} \bigr) \Biggr\rrvert >
\epsilon \Biggr) \to0
\end{equation}
as $n \to\infty$.

To this end, choose $L \in\bbn$ and $0 < \xi< 1/2$ so that
%
\begin{equation}
\label{e:const.rest2} L+1 > \frac{4}{\alpha} \quad\mbox{and}\quad \frac{1}{2} - \xi L >
0 .
\end{equation}
By \eqref{e:single.poisson.jump}, we can write
%
\begin{eqnarray}\label{e:one.term.vanish}
&&\hspace*{-18pt}P \Biggl( \max_{1 \leq k \leq n} \Biggl\llvert \sum
_{j=1}^{\infty} \epsilon_j (
\Gamma_j + x )^{-1/\alpha} \one_A \circ
T^k \bigl(U_j^{(n)} \bigr) \Biggr\rrvert >
\epsilon \Biggr) \nonumber
\\
&&\hspace*{-18pt}\quad\leq P \Biggl( \max_{1 \leq k \leq n} \Biggl\llvert \sum
_{j=1}^{\infty} \epsilon_j (
\Gamma_j + x )^{-1/\alpha} \one_A \circ
T^k \bigl(U_j^{(n)} \bigr) \Biggr\rrvert >
\epsilon, \mbox{ and for each } m = 1,\ldots, n,
\\
&&\hspace*{-18pt}\hphantom{\quad\leq P \Biggl(}{} ( \Gamma_j + x )^{-1/\alpha} \one_A \circ
T^m \bigl(U_j^{(n)} \bigr) > \xi\epsilon
\mbox{ for at most one } j =1,2,\ldots \Biggr) + \mathrm{o}(1) .
\nonumber
\end{eqnarray}
Notice that for every $k=1,\ldots, n$, the Poisson random measure
represented by the points
\[
\bigl( \epsilon_j ( \Gamma_j + x )^{-1/\alpha}
\one_A \circ T^k \bigl(U_j^{(n)}
\bigr), j=1,2,\ldots \bigr) %
\]
is distributionally equal to the Poisson random measure represented by
the points
\[
\bigl( \epsilon_j \bigl( b_n^{\alpha}
\mu(A)^{-1} \Gamma_j +x \bigr)^{-1/\alpha}, j=1,2,\ldots
\bigr) . %
\]
Therefore, the first term on the right-hand side of (\ref
{e:one.term.vanish}) can be bounded by
\begin{eqnarray*}
&&\sum_{k=1}^n P \Biggl( \Biggl\llvert \sum
_{j=1}^{\infty} \epsilon_j (
\Gamma_j + x )^{-1/\alpha} \one_A \circ
T^k \bigl(U_j^{(n)} \bigr) \Biggr\rrvert >
\epsilon,
\\
&&\hphantom{\sum_{k=1}^n P \Biggl(}{}( \Gamma_j + x )^{-1/\alpha} \one_A \circ
T^k \bigl(U_j^{(n)} \bigr) > \xi\epsilon
\mbox{ for at most one } j=1,2,\ldots \Biggr) %
\\
&&\quad= nP \Biggl( \Biggl\llvert \sum_{j=1}^{\infty}
\epsilon_j \bigl( b_n^{\alpha}
\mu(A)^{-1} \Gamma_j + x \bigr)^{-1/\alpha} \Biggr
\rrvert > \epsilon,
\\
&&\hphantom{\quad= nP \Biggl(}{}\bigl( b_n^{\alpha} \mu(A)^{-1}
\Gamma_j + x \bigr)^{-1/\alpha} > \xi \epsilon \mbox{ for at most
one } j = 1,2,\ldots \Biggr)
\\
&&\quad\leq n P \Biggl( \Biggl\llvert \sum_{j=L+1}^{\infty}
\epsilon_j \bigl( b_n^{\alpha}
\mu(A)^{-1} \Gamma_j + x \bigr)^{-1/\alpha} \Biggr
\rrvert > \biggl( \frac{1}{2} - \xi L \biggr)\epsilon \Biggr) . %
\end{eqnarray*}

In the last step we used the fact that, for $x \geq
(\epsilon/2)^{-\alpha}$, the magnitude of each term in the infinite
sum does not exceed $\epsilon/2$. By the contraction inequality for
Rademacher series (see, e.g., Proposition 1.2.1 of
Kwapie\'n and Woyczy\'nski \cite{kwapien:woyczynski:1992}),
\begin{eqnarray*}
&&nP \Biggl( \Biggl\llvert \sum_{j=L+1}^{\infty}
\epsilon_j \bigl( b_n^{\alpha}
\mu(A)^{-1} \Gamma_j + x \bigr)^{-1/\alpha} \Biggr
\rrvert > \biggl( \frac
{1}{2} - \xi L \biggr)\epsilon \Biggr) %
\\
&&\quad\leq2 n P \Biggl( \Biggl\llvert \sum_{j=L+1}^{\infty}
\epsilon_j \Gamma _j^{-1/\alpha} \Biggr\rrvert >
\biggl( \frac{1}{2} - \xi L \biggr)\epsilon\mu (A)^{-1/\alpha}
b_n \Biggr) . %
\end{eqnarray*}
As before, by Markov's inequality and using the
constraints of the constants $L \in\bbn$ and $0 < \xi< 1/2$ given in
(\ref{e:const.rest2}),
\begin{eqnarray*}
&&2n P \Biggl( \Biggl\llvert \sum_{j=L+1}^{\infty}
\epsilon_j \Gamma _j^{-1/\alpha
} \Biggr\rrvert >
\biggl( \frac{1}{2} - \xi L \biggr)\epsilon\mu (A)^{-1/\alpha
}
b_n \Biggr) \\
&&\quad\leq\frac{2n\mu(A)^{4/\alpha}}{(2^{-1}-\xi L)^4
\epsilon
^4 b_n^4} E \Biggl\llvert \sum
_{j=L+1}^{\infty} \epsilon_j \Gamma
_j^{-1/\alpha} \Biggr\rrvert ^4 \to0 %
\end{eqnarray*}
as $n \to\infty$ and, hence, (\ref{e:upper.app.Gamma1}) follows.
\end{pf*}

%
\begin{remark} \label{rk:other.beta}
The crucial point in the proof of the theorem is the ``single Poisson
jump property'' \eqref{e:single.poisson.jump} that shows that,
essentially, a single Poisson point of the type $\Gamma_j^{-1/\alpha}$
plays the decisive role in determining the size of a partial
maximum. This enabled us to show that the normalized partial maxima
converge to the first Poisson point $\Gamma_1^{-1/\alpha}$ (which, of
course, has exactly the standard \mbox{$\alpha$-Fr\'echet} law). We can
guarantee the ``single Poisson jump property'' in the case
$1/2<\beta<1$. On the other hand, in the range $0 < \beta< 1/2$,
the condition \eqref{e:single.poisson.jump} is no longer valid. We
believe that the limiting process will involve a finite, but
random, number of the Poisson points of the type
$\Gamma_j^{-1/\alpha}$. This will preclude a limiting Fr\'echet
law. The details of this are still being worked out, and will appear
in a future work. In the boundary case $\beta=1/2$, the statement
\eqref{e:weak.conv} still holds under certain additional
conditions. This is the case, for example, for the Markov shift
operators presented at the end of the paper. See also Example 5.3 in
Samorodnitsky \cite{samorodnitsky:2004a}.
\end{remark}

%
\begin{remark}
There is no doubt that the convergence result in Theorem
\ref{t:main.max} can be extended to more general infinitely divisible
random measures $M$ in \eqref{e:underlying.proc}, under appropriate
assumptions of regular variation of the L\'evy measure of $M$ and
integrability of the function $f$. In particular, regardless of the
size of $\alpha>0$, the time scaled extremal Fr\'echet processes
$Z_{\alpha, \beta}$ are likely to
appear in the limit in \eqref{e:weak.conv}. Furthermore, the symmetry
of the
process $\BX$ has very little to do with the limiting distribution of
the partial maxima. For example, a straightforward symmetrization
argument allows one to extend \eqref{e:weak.conv} to skewed
$\alpha$-stable processes, at least in the sense of convergence of
finite-dimensional distributions. The reason we decided to restrict
the presentation to the symmetric stable case had to do with a
particularly simple form of the series representation
\eqref{e:series.max} available in this case. This has allowed us to avoid
certain technicalities that might have otherwise blurred the main
message, which
is the effect of memory on the functional limit theorem for the
partial maxima.
\end{remark}

One can obtain concrete examples of the situations in which the result
of Theorem \ref{t:main.max} applies by taking, for instance, one of
the variety of pointwise
dual ergodic operators provided in Aaronson \cite{aaronson:1981} and
Zweim\"uller \cite{zweimuller:2009}, and embedding them into the
integral form of stationary S$\alpha$S processes. We conclude the
current paper
by mentioning the example of a flow generated by a
null recurrent Markov chain. This example appears in
Samorodnitsky \cite{samorodnitsky:2004a}, Owada and Samorodnitsky
\cite
{owada:samorodnitsky:2012}, and
Owada \cite{owada:2013} as well.

Consider an irreducible null recurrent Markov chain $(x_n, n \geq
0)$ defined on an infinite countable state space $\bbs$ with the
transition matrix $(p_{ij})$. Let $(\pi_i, i \in\bbs)$ be its
unique (up to constant multiplication) invariant measure with
$\pi_{i_0}=1$ for some fixed state $i_0\in\bbs$.
Note that $(\pi_i)$ is necessarily an infinite
measure. Define a $\sigma$-finite and infinite measure on
$(E,\mathcal{E}) = (\bbs^{\bbn}, \mathcal{B}(\bbs^{\bbn}))$ by
\[
\mu(B) = \sum_{i \in\bbs} \pi_i
P_i(B),\quad\quad B \subseteq\bbs^{\bbn} , %
\]
where $P_i(\cdot)$ denotes the probability law of $(x_n)$ starting in
state $i \in\bbs$. Let
\[
T(x_0, x_1, \ldots) = (x_1,x_2,
\ldots) %
\]
be the usual left shift operator on $\bbs^{\bbn}$. Then $T$ preserves
$\mu$. Since the Markov chain is irreducible and null recurrent, $T$
is conservative and ergodic (see Harris and Robbins \cite
{harris:robbins:1953}).

We consider the set $A = \{ x \in\bbs^{\bbn}\dvtx  x_0=i_0 \}$ with the
fixed state $i_0\in\bbs$ chosen above. Since
\[
\widehat{T}^k \one_A (x) = P_{i_0}
(x_k = i_0) \quad\quad\mbox{for } x \in A %
\]
is constant on $A$ (see Section~4.5 in Aaronson \cite{aaronson:1997}),
we can
choose as the normalizing sequence $a_n = \sum_{k=1}^n
P_{i_0}(x_k=i_0)$, and see that the expression $a_n^{-1} \sum_{k=1}^n
\widehat{T}^k \one_A(x)$ is identically equal to $1=\mu(A)$ on $A$.
Therefore, the map $T$ is pointwise dual ergodic, and the
Darling--Kac set condition, in fact, reduces to a simple
identity. Let
\[
\varphi_A(x) = \min\{ n \geq1\dvtx  x_n \in A \} ,\quad\quad x \in
\bbs^{\bbn} %
\]
be the first entrance time, and assume that
%
\begin{equation}
\label{e:return.DK} \sum_{k=1}^n
P_{i_0}(\varphi_A \geq k) \in RV_{\beta}
\end{equation}
for some $\beta\in(1/2, 1)$. Two equivalent conditions to
\eqref{e:return.DK} are given in
Resnick \textit{et~al}. \cite{resnick:samorodnitsky:xue:2000}. Note that the
exponent of regular variation $\beta$ controls how frequently the
Markov chain
returns to $A$. Since $\mu(\varphi_A=k) = P_{i_0}(\varphi_A \geq k)$
for $k \geq1$ (see Lemma 3.3 in
Resnick \textit{et~al}. \cite{resnick:samorodnitsky:xue:2000}), we have
\[
w_n \sim\mu(\varphi_A \leq n) \in RV_{\beta} .
\]

Then all of the assumptions of Theorem
\ref{t:main.max} are satisfied for any $f\in
L^{\alpha}(\mu)\cap L^{\infty}(\mu)$, supported by~$A$.


\section*{Acknowledgements}
We are very grateful for the uncommonly detailed
and useful comments we have received from two anonymous referees and
an anonymous Associate Editor. These comments helped us to introduce a
number of improvements to the
paper (this includes eliminating a serious error).
This research was partially supported by the ARO
Grants W911NF-07-1-0078 and W911NF-12-10385, NSF Grant DMS-1005903
and NSA Grant H98230-11-1-0154 at Cornell University.


%

\printhistory

\end{document}